\newcommand{\abs}[1]{\vert #1 \vert}
\newcommand{\norm}[1]{\left\Vert #1 \right\Vert}
\newcommand{\bignorm}[1]{\bigl\Vert #1 \bigr\Vert}
\newcommand{\C}{\mathbb{C}}
\newcommand{\R}{\mathbb{R}}
\newcommand{\angles}[1]{\langle #1 \rangle}
\newcommand{\bigangles}[1]{\big\langle #1 \big\rangle}
\DeclareMathOperator{\diag}{diag}
\newtheorem{theorem}{Theorem}
\newtheorem{lemma}{Lemma}
\theoremstyle{definition}
\theoremstyle{remark}
\newtheorem{remark}{Remark}
\numberwithin{equation}{section}
\title[Local well-posedness of YM below energy norm]{Local well-posedness of Yang-Mills equations in Lorenz gauge below the energy norm }
\author{Achenef Tesfahun}
\address{Universit\"{a}t Bielefeld, Fakult\"{a}t  
f\"{u}r
Mathematik, 
Postfach 10 01 31 , D-33 Bielefeld, Germany}
\email{achenef@math.uni-bielefeld.de}
\thanks{2000 Mathematics Subject Classifications. 35Q40; 35L70.
\\The author acknowledges the support from the German research foundation, Collaboration Research Center 701.}
\begin{document}

\begin{abstract} 
We prove that the Yang-Mills equations in the Lorenz gauge (YM-LG) is locally well-posed for data below the energy norm, 
in particular, we can take data for the gauge potential $A$ and 
the associated curvature $F$ 
in $H^s\times H^{s-1}$ and $H^r\times H^{r-1}$ for $s=(\frac67+,-\frac1{14} +)$, respectively. This extends a recent 
result by Selberg 
and the present author on the local well-posedness of YM-LG for finite energy data (specifically, 
for $(s, r)=(1-,  0)$).
We also prove unconditional uniqueness of the energy class solution, that is, uniqueness in the classical space $C([-T, T]; X_0)$, 
where $X_0$ is
the energy data space. The key ingredient in the proof is the fact that most bilinear terms 
in YM-LG contain null structure some of which uncovered in the present paper. 

\end{abstract}

\maketitle
\section{Introduction}
The aim of this paper is to prove local well-posedness of YM-LG for data below the energy norm. As a consequence, we 
show that the energy class solution constructed recently by Selberg and the present author is unconditionally unique. 

Let $\mathcal G$ be a compact Lie group and $\mathfrak g$ its Lie algebra. For simplicity, we shall assume 
$\mathcal G=SO(n, \R)$ (the group of orthogonal matrices of determinant one) or 
$\mathcal G=SU(n, \C)$ (the group of unitary matrices of determinant one). 
Then $\mathfrak g=so(n, \R)$ (the algebra of skew symmetric matrices) or $\mathfrak g=su(n, \C)$ 
(the algebra of trace-free skew hermitian matrices).

Given a $\mathfrak g$-valued 1-form $A$ on the Minkowski space-time $\R^{1+3}$, 
we denote by  $F=F^{(A)}$ the associated curvature $F=dA + [A,A]$. That is, given
 $$A_\alpha \colon \R^{1+3} \to \mathfrak g, $$
 we define $F_{\alpha\beta} =  F^{(A)}_{\alpha\beta}$ by
\begin{equation}\label{Curvature}
  F_{\alpha\beta} = \partial_\alpha A_\beta - \partial_\beta A_\alpha + [A_\alpha,A_\beta],
\end{equation}
where $\alpha,\beta \in \{0,1,2,3\}$.  

 In this set up, the Yang-Mills equations (YM) read
  \begin{align}
 \label{YM1}
 \partial^\alpha F_{\alpha\beta} + [A^\alpha,  F_{\alpha\beta}]=0 , \quad  \beta \in \{0,1,2,3\},  
  \end{align} 
  where we follow the convention that repeated upper/lower indices are implicitly summed over their range.
  Indices are raised and lowered using the Minkowski metric $\diag(-1,1,1,1)$ on $\R^{1+3}$. 
  Roman indices $i, j,k,\dots$ run over $1,2,3$ and Greek indices $\alpha, \beta, \gamma $ over $0,1,2,3$.  
  Points on  $\R^{1+3}$ are written $(x^0,x^1,x^2,x^3)$ with $t=x^0$,
  and $\partial^\alpha$ denotes the partial derivative with respect to $x^\alpha$. 
  We write $\partial_t=\partial_0$, $\nabla = (\partial_1,\partial_2,\partial_3)$, and $\partial=(\partial_t, \nabla)$.  

 The total energy for YM, at time $t$,  is given by
$$
  \mathcal E(t) =\sum_{0\le \alpha, \beta\le 3} \int_{\R^3} \abs{F_{\alpha \beta}(t,x)}^2  \, dx,
$$
and is conserved for a smooth solution decaying sufficiently fast at spatial infinity, 
i.e., $$ \mathcal E(t)= \mathcal E(0).$$ 

The equation \eqref{YM1} is invariant under the
 \textit {gauge transformation}
 \begin{equation}\label{GaugeTransform}
       A_\alpha\to A_\alpha' =U A_\alpha  U^{-1}-  (\partial_\alpha U )  U^{-1}
  \end{equation}
  for sufficiently smooth function $ U: \R^{1+3}\rightarrow G$.
Indeed, if we  denote $F'=F^{(A')}$ and $ D'_\alpha=D^{(A')}_\alpha$, where $D_\alpha=D^{(A)}_\alpha$ is the \textit{covariant derivative} operator associated to $A$ 
  given by $D_\alpha = \partial_\alpha + [A_\alpha, \cdot]$, then a simple calculation shows that 
 \begin{align*}
   F' &=U F U^{-1}, \quad   D'_\alpha F'= U [D_\alpha F]  U^{-1}.
\end{align*} 
This in turn implies 
$$
 D'^\alpha{F'_{\alpha\beta}}= \partial^\alpha F'_{\alpha\beta} + [A'^\alpha,  F'_{\alpha\beta}]=0
$$
which shows that \eqref{YM1} is invariant under the \textit {gauge transformation} \eqref{GaugeTransform}, i.e., 
if $(A, F)$ satisfies \eqref{YM1}, so does  $(A',F')$.  A solution is therefore a representative of
its equivalent class, and hence we may impose an additional gauge condition (on $A$). 
The most popular gauges are the
temporal gauge:  $A_0=0$,
the Coulomb gauge: $\partial^i A_i=0$ and
 the Lorenz gauge:  $\partial^\alpha A_\alpha=0$.

 In both temporal and Lorenz gauges, YM can be written as a system of nonlinear wave equations whereas in 
 Coulomb gauge it is expressed as a system of nonlinear wave equations coupled with an elliptic equation.  
 In temporal gauge, Segal \cite{Segal1979} proved local and global well-posedness for initial data (for $A$)
 in the Sobolev space \footnote{Here $H^s = (I-\Delta)^{-s/2} L^2(\R^3)$.} $H^s \times H^{s-1}$ with $s\ge 3$.  
 This was improved later by Eardley and Moncrief \cite{Eardley1982, Eardley1982b} to $s\ge 2$ for the more general Yang-Mills-Higgs
 equations using the conservation of energy.  
  To prove well-posedness for finite energy data (that is, $s=1$), however, 
 requires the bilinear terms to be null forms. In Coulomb gauge, Klainerman-Machedon \cite{Klainerman1995} 
 showed that these bilinear terms are in fact null forms and used this fact to prove global well-posedness of YM
  for finite energy data. This result was later extended for the more general Yang-Mills-Higgs
 equations by Keel \cite{Keel1997}. Also in the temporal gauge YM contains a partial null structure, and 
  Tao \cite{Tao2003} used this fact to prove local well-posedness for $s>3/4$, for data with small norm. 
 Oh \cite{Oh2012, Oh2012b} developed a new approach based on the \emph{the Yang-Mills heat flow} to recover 
 the finite energy well-posedness result of Klainerman-Machedon \cite{Klainerman1995}.  Local and global regularity properties of the YM and Maxwell-Klein-Gordon equations have also been studied in $1+4$ dimensions, which is the energy-critical case; see \cite{Klainerman1999, Rodnianski2004, Sterbenz2007, Krieger2009}.

 Recently, Selberg and the present author \cite{st2013} discovered 
 null structure in most of the bilinear terms 
in YM-LG, and subsequently proved 
 local well-posedness for finite energy data. This result was later extended for the more general Yang-Mills-Higgs
 equations by the present author \cite{t2014}.
 In the present paper, we uncover additional null structure in YM-LG and prove local well-posedness
 for data below the energy norm, in particular, we can take data for $A$ and $F$ in $H^s\times H^{s-1}$ 
 and  $H^r\times H^{r-1}$ for $(s,r)=(\frac67+,-\frac1{14}+),$\footnote{We use the notation $a\pm=a\pm \varepsilon$,
 where $\varepsilon$
is assumed to be a sufficiently small positive number.} 
respectively.
 On the other hand, the scaling critical regularity exponents are $(s_c, r_c)=(\frac12,-\frac12).$ 
 Thus, there is still a gap left between the critical regularity and our result, yet this is the first 
 large data well-posedness result for YM below the energy norm. 
 To improve on this one might need to uncover null structure in all of the bilinear terms in YM-LG. 
 
 In \cite{st2013} the authors proved the  existence of energy class local solution
 $$(A, \partial_t A, F, \partial_t F)\in C([-T, T]; X_0),$$
 where $X_0$ is
the energy data space, but uniqueness was known only in
 the contraction space of $X^{s,b}$-type, which is strictly smaller than the natural solution space $C([-T, T]; X_0)$.
 Here we show that uniqueness in fact holds in the latter space, that is, the energy class solution 
 is unconditional unique.
  To prove this we rely on Strichartz estimates and product estimates in the wave-Sobolev spaces $H^{s,b}$ 
  which is due to D'Ancona, Foschi and Selberg \cite{dfs2012}. Using an
idea of Zhou \cite{z2000} we iteratively improve the known regularity of the solution, until
we reach a space where uniqueness is known.

 Lorenz-gauge null structure was first discovered in \cite{Selberg2010a} for the Maxwell-Dirac equations, 
 and then for the Maxwell-Klein-Gordon equations \cite{Selberg2010b} (see also \cite{Selberg2013}).

\subsection{YM-LG as a system of nonlinear wave equations}
Expanding \eqref{YM1} in terms of the gauge potentials $\left\{A_\alpha\right\}$, we get the following system of second order PDE: 
 \begin{equation}\label{YM2}
  \square A_\beta = \partial_\beta\partial^\alpha A_\alpha- [\partial^\alpha A_\alpha, A_\beta] - [A^\alpha,\partial^\alpha A_\beta] - 
  [A^\alpha, F_{\alpha\beta}].
\end{equation}
If we now impose the \emph{Lorenz gauge} condition,
then the system \eqref{YM2} reduces to the nonlinear wave equation
 \begin{equation}\label{YM3}
  \square A_\beta = - [A^\alpha,\partial_\alpha A_\beta] -   [A^\alpha, F_{\alpha\beta}].
\end{equation}

In addition, regardless of the choice of gauge, $F$ satisfies the wave equation 
    \begin{equation}\label{YMF1}
 \begin{split}
   \square F_{\beta\gamma}&=-[A^\alpha,\partial_\alpha F_{\beta\gamma}] - \partial^\alpha[A_\alpha,F_{\beta\gamma}] - \left[A^\alpha,[A_\alpha,F_{\beta\gamma}]\right]
   \\
   & \quad - 2[F^{\alpha}_{{\;\;\,}\beta},F_{\gamma\alpha}].
   \end{split}
 \end{equation}
  Indeed,  this will follow if we apply apply $D^\alpha$ to the 
 Bianchi identity
$$
  D_\alpha F_{\beta\gamma} + D_\beta F_{\gamma\alpha} + D_\gamma F_{\alpha\beta} = 0
$$
and simplify the resulting expression using the commutation identity
$$
  D_\alpha D_\beta X - D_\beta D_\alpha X = [F_{\alpha\beta},X]
$$
and  \eqref{YM1}  (see e.g. \cite{st2013}).

Expanding the second and fourth terms in \eqref{YMF1}, and also imposing the Lorenz gauge, yields 
\begin{equation}\label{YMF2}
\begin{split}
       \square F_{\beta\gamma}&= - 2[A^\alpha,\partial_\alpha F_{\beta\gamma}]
      + 2[\partial_\gamma A^\alpha, \partial_\alpha A_\beta]
      - 2[\partial_\beta A^\alpha, \partial_\alpha A_\gamma]
      \\
      &\quad + 2[\partial^\alpha A_\beta , \partial_\alpha A_\gamma]
                 + 2[\partial_\beta A^\alpha, \partial_\gamma A_\alpha] - [A^\alpha,[A_\alpha,F_{\beta\gamma}]] 
                 \\
                 &\quad  +2[F_{\alpha\beta},[A^\alpha,A_\gamma]]- 2[F_{\alpha\gamma},[A^\alpha,A_\beta]]
                      - 2[[A^\alpha,A_\beta],[A_\alpha,A_\gamma]] .  
                         \end{split}    
\end{equation}

If we ignore the matrix commutator structure and other special structures, and cubic and quartic terms 
in the equations \eqref{YM3}, \eqref{YMF2}, then we would obtain a schematic system of the form
\begin{align*}
 \square u&= u\partial u + uv,
 \\
 \square v&= u\partial v + \partial u \partial u .
\end{align*}
By using Strichartz estimates one can show local well-posedness of this system for data 
$(u(0), \partial_t u(0)) \in H^{1+\varepsilon}\times H^{\varepsilon}$ and 
$(v(0), \partial_t v(0)) \in H^{\varepsilon}\times H^{-1+\varepsilon}$
(see e.g. \cite{p1993}).
Moreover, in view of the counter examples of Lindblad \cite{lbd1996} these results are sharp. 
However, if the bilinear terms are null forms we can do better and prove local well-posedness 
for data in the energy class which corresponds to $\varepsilon=0$. 
In fact, we can even go below energy and prove local-well posedness for some $\varepsilon < 0$ 
by using bilinear estimates
in $X^{s, b}$-spaces.
The standard null forms are given by
\begin{equation}\label{OrdNullforms}
\left\{
\begin{aligned}
Q_{0}(u,v)&=\partial_\alpha u \partial^\alpha v=-\partial_t u \partial_t v+\partial_i u \partial^j v,
\\
Q_{\alpha\beta}(u,v)&=\partial_\alpha u \partial_\beta v-\partial_\beta u \partial_\alpha v.
\end{aligned}
        \right.
\end{equation}

  In Lorenz gauge, 
 all the bilinear terms in \eqref{YM3}, \eqref{YMF2} except $ [A^\alpha, F_{\alpha\beta}]$ turn out to be null forms.
 For this reason, we lose regularity on the solution $A$ starting from finite energy data.  
However, $A$ is only a potential representing the electromagnetic field $F$.
The most interesting physical quantity here is $F$, and 
we do not lose regularity for these quantity starting from finite energy data. In \cite{st2013}
 these facts were enough to prove local well-posedness for finite energy data.
 
 Note on the other hand by expanding the last term in the right hand side of \eqref{YM3}, we could write 
 \begin{equation}\label{YM4}
  \square A_\beta = - 2[A^\alpha,\partial_\alpha A_\beta] + [A^\alpha,\partial_\beta A_\alpha] - 
  [A^\alpha, [A_\alpha,A_\beta]].
\end{equation}
The cubic term in this equation does not cause a problem, but the new bilinear term $ [A^\alpha,\partial_\beta A_\alpha]$ does unless it contains a null structure. In fact it is worse than the term $[A^\alpha, F_{\alpha\beta}]$, since $F$ has better regularity than $\partial A$, the reason being  $F$ solves a nonlinear wave equation with bilinear null from terms. 
So in \cite{st2013} it was crucial that equation  \eqref{YM3} (together with \eqref{YMF2}) 
is used instead of the expanded version \eqref{YM4} 
in order to prove finite energy local well-posedness.

In the present paper, we shall nevertheless show the term $ [A^\alpha,\partial_\beta A_\alpha]$ in \eqref{YM4} does also contain 
a partial null structure. So expanding $ [A^\alpha, F_{\alpha\beta}]$ and writing the equation for $A$ as in \eqref{YM4} 
has in fact an advantage.
 By using a \emph{div-curl} decomposition of the spatial component of $A$, 
we shall write $ [A^\alpha,\partial_\beta A_\alpha]$ as a sum of  bilinear null form terms,
bilinear terms which are smoother, 
a bilinear term which contains only $F$ and higher order terms in $(A, F)$ which are harmless. 
The resulting (schematic) equation for $A$ will look like \footnote{Here we use the notation $\angles{\cdot} =\sqrt{1+\abs{\cdot}^2}$.}  
\begin{equation}\label{YM5}
\left\{
\begin{aligned}
\square A&=\Pi(A, \partial A)+ \Pi(\angles{\nabla}^{-1}A, A)+\Pi(\angles{\nabla}^{-2}A, \angles{\nabla}A)+ \Pi(\angles{\nabla}^{-1}F, F) 
\\
&\quad +  \Pi(A, A, A)+ \Pi(\angles{\nabla}^{-1}F, A, A)+\Pi(F, \angles{\nabla}^{-1}( A A))
\\
&\quad + \Pi(\angles{\nabla}^{-1}( AA), A,A),
 \end{aligned}
        \right.
\end{equation}
where $\Pi(\cdots)$ denotes a multilinear operator in its arguments. 

The cubic and quartic terms in \eqref{YM5} do not cause problems. We show that the first bilinear term $\Pi(A, \partial A)$ is a sum of null forms, 
whereas the second and the third bilinear terms are a lot smoother. 
The only term that causes difficulty is the bilinear term $\Pi(\angles{\nabla}^{-1}F, F) $ which as far as we know  is not a null form.
However, this term has at least better regularity than the term $[A^\alpha, F_{\alpha\beta}]$ in \eqref{YM3}, since $F$ solves a nonlinear wave equation with null form bilinear terms.
Using these facts we are able to prove local well-posedness for large data below the energy norm and consequently show that
the energy class solution is unconditionally unique.

\subsection{The Cauchy problem and statement of the result}
We want to solve the system \eqref{YMF2}-\eqref{YM4} simultaneously for $A$ and $F$.
So to pose the Cauchy problem for this system, we consider initial data for $(A,F)$ at $t=0$:
\begin{equation}\label{Data-AF}
\left\{
\begin{aligned}
    A(0) &= a, \quad \partial_t A(0) = \dot a,
        \\
    F(0) &=f, \quad \partial_t F(0) = \dot f.
 \end{aligned}
        \right.
\end{equation}
In fact, the initial data for $F$ can be determined from $(a, \dot a)$ as follows:
\begin{equation}\label{f}
\left\{
\begin{aligned}
  f_{ij} &= \partial_i a_j - \partial_j a_i + [a_i,a_j],
  \\
  f_{0i} &= \dot a_i - \partial_i a_0 + [a_0,a_i],
\\
  \dot f_{ij} &= \partial_i \dot a_j - \partial_j \dot a_i + [\dot a_i,a_j]+[ a_i, \dot a_j],
  \\
 \dot f_{0i} &= \partial^j f_{ji} +[a^\alpha, f_{\alpha i}] 
\end{aligned}
\right.
\end{equation}
where the first three expressions come from \eqref{Curvature} whereas 
the last one comes from \eqref{YM1} with $\beta=i$.

Note that the Lorenz gauge condition $\partial^\alpha A_\alpha=0$ and \eqref{YM1} with $\beta=0$ impose the constraints 
\begin{equation}\label{Const}
\left\{
\begin{aligned}
\dot a_0&= \partial^i a_i,
\\
   \partial^i f_{0i} &= [a^i, \dot a_{ i}].
\end{aligned}
\right.
\end{equation}
It turns out if $(A, F)$ is a solution to \eqref{YMF2}--\eqref{YM4} such that at $t=0$ \eqref{f}
and \eqref{Const} are satisfied, then the Lorenz gauge condition is satisfied for all times where 
the solution is sufficiently smooth.
That is, the Lorenz gauge condition propagates (see \cite{st2013} for the details). 

We now state our main result.
\begin{theorem}\label{MainThm}  Let $(s,r)=(\frac67+\varepsilon, -\frac1{14}+\varepsilon)$ or 
$(1-\varepsilon, 0)$ for sufficiently small $\varepsilon>0$.
\begin{enumerate}[(i)]
 \item (Local well-posedness.) Given initial data \eqref{Data-AF} with regularity
$$
 (a, \dot a) \in H^s\times H^{s-1} , \quad
 (f, \dot f) \in H^{r}\times H^{r-1},
$$
there exists a time $T> 0$ depending on the initial data norm,  
and a solution
\begin{equation}\label{Soln}
\left\{
\begin{aligned}
 A &\in C([-T,T]; H^{s}) \cap C^1([-T,T]; H^{s-1}),
  \\
  F &\in C([-T,T];  H^{r}) \cap C^1([-T,T]; H^{r-1}),
 \end{aligned}
        \right.
\end{equation}
solving the system \eqref{YMF2}--\eqref{YM4} on $S_T=(-T, T)\times \R^3$ in the sense of distributions.

The solution has the regularity
$$\left( A \pm \frac{1}{i\angles{\nabla}} \partial_t A \right)\in X_\pm^{s, \frac34+}(S_T), \quad 
\left( F \pm \frac{1}{i\angles{\nabla}} \partial_t F \right)\in X_\pm^{r, \frac12+}(S_T),$$
and it is the unique solution with this property (these spaces are defined in Section 3). 
Moreover, the solution depends continuously on the data
and higher regular data persists in time.
\item (Unconditional uniqueness of energy class solution.)
In the case where $(s,r)=(1-\varepsilon, 0)$,  the solution is in fact unique in the class \eqref{Soln}.
\end{enumerate}
\end{theorem}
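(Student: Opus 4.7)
The plan is to recast the system \eqref{YMF2}--\eqref{YM4} as a coupled first-order system by introducing the $\pm$-decomposition $A_\pm = \tfrac12\bigl(A\pm\tfrac{1}{i\angles{\nabla}}\partial_t A\bigr)$ and similarly for $F$, so each wave equation becomes a pair of half-wave equations driven by the nonlinearities of \eqref{YM5} and \eqref{YMF2}. A Picard iteration is then run in the wave-Sobolev spaces $X_\pm^{s,3/4+}(S_T)$ for $A$ and $X_\pm^{r,1/2+}(S_T)$ for $F$, with $T$ chosen small in terms of the data norms so that the usual $X^{s,b}$ lifting ($b>1/2$) applied to Duhamel gives a contraction.

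The core of part (i) is a collection of bilinear and trilinear estimates for each term appearing on the right-hand side of \eqref{YM5} and \eqref{YMF2}. All bilinear terms in the $F$-equation except $[A^\alpha,\partial_\alpha F_{\beta\gamma}]$ were shown in \cite{st2013} to be classical null forms, and the corresponding estimates are reproved at the roughened regularities $(s,r)=(\frac67+,-\frac1{14}+)$. For the $A$-equation the re-expansion leading to \eqref{YM5} is essential: the term $\Pi(A,\partial A)$ is rewritten, via a div-curl decomposition of the spatial part of $A$, as a sum of classical $Q_{\alpha\beta}$ and $Q_0$ null forms plus the lower-order pieces $\Pi(\angles{\nabla}^{-1}A,A)$ and $\Pi(\angles{\nabla}^{-2}A,\angles{\nabla}A)$, which are sufficiently smoother to be handled by the wave-Sobolev product estimates of D'Ancona-Foschi-Selberg \cite{dfs2012}. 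The cubic and quartic terms $\Pi(A,A,A)$, $\Pi(\angles{\nabla}^{-1}F,A,A)$, $\Pi(F,\angles{\nabla}^{-1}(AA))$ and $\Pi(\angles{\nabla}^{-1}(AA),A,A)$ are harmless: they are bounded by combining Strichartz with \cite{dfs2012}, with room to spare.

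The hard part will be the single bilinear term $\Pi(\angles{\nabla}^{-1}F,F)$ in \eqref{YM5}, which does not appear to be a null form. To close the iteration one must exploit \emph{both} the $\angles{\nabla}^{-1}$ derivative gain \emph{and} the extra regularity enjoyed by $F$ due to the fact that its own nonlinearity \eqref{YMF2} is made (except for the transport-type term $[A^\alpha,\partial_\alpha F]$) of genuine null forms. Concretely, the plan is to prove an estimate of the type
\begin{equation*}
\bignorm{\angles{\nabla}^{-1}F\cdot F}_{X_\pm^{s-1,-1/4+}(S_T)}\lesssim \norm{F}_{X_\pm^{r,1/2+}(S_T)}^2
\end{equation*}
at the endpoints $(s,r)=(\frac67+,-\frac1{14}+)$ via dyadic Littlewood-Paley decomposition, a careful modulation analysis, and the standard transversality/angle arguments; the exponents $\frac67$ and $-\frac1{14}$ are in fact dictated by the scaling at which this single non-null estimate just barely closes.

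For part (ii), the plan is to follow the bootstrap strategy of Zhou \cite{z2000}. Starting from a solution merely in the energy class $C([-T,T];X_0)$, I would plug it into the Duhamel representation and use Strichartz together with the $H^{s,b}$-product estimates of \cite{dfs2012} to show that, after possibly shrinking the time interval, the solution automatically inherits some initial amount of $X^{\sigma,1/2+}$-regularity, initially weaker than the iteration space of part (i). A finite number of such iterations, each gaining a fixed amount, places the solution inside the space $X^{s,3/4+}\times X^{r,1/2+}$ where uniqueness has already been established in part (i). A standard connectedness argument then propagates uniqueness to the whole of $[-T,T]$, completing the unconditional uniqueness claim.
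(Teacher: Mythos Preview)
Your overall strategy matches the paper's: the $\pm$-decomposition, Picard iteration in $X^{s,b}$-spaces, the div-curl decomposition of $\mathbf{A}$ to expose null structure in $[A^\alpha,\partial_\beta A_\alpha]$, and the Zhou-type bootstrap for unconditional uniqueness are exactly what the paper does. Two points, however, deserve correction.

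First, you twice single out the transport term $[A^\alpha,\partial_\alpha F_{\beta\gamma}]$ in the $F$-equation as \emph{not} being a null form. This is backwards: the central observation of \cite{st2013}, reproved here as Lemma~\ref{Lemma-Null0}, is precisely that in Lorenz gauge $[A^\alpha,\partial_\alpha\phi]=\mathfrak{Q}[\angles{\nabla}^{-1}A,\phi]+[\angles{\nabla}^{-2}A^\alpha,\partial_\alpha\phi]$, a genuine null form plus a two-derivative-smoother remainder. This matters: if you attempted to bound $[A^\alpha,\partial_\alpha F]$ as a raw product $A\,\partial F$, the required estimate $\norm{uv}_{H^{r-1,b'-1}}\lesssim\norm{u}_{H^{s,a}}\norm{v}_{H^{r-1,b}}$ \emph{fails} at $(s,r)=(\tfrac67+,-\tfrac1{14}+)$ since $\sum s_i=(1-r)+s+(r-1)=s=\tfrac67<1$ violates the product-law threshold in Theorem~\ref{ThmAtlas}. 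So the null structure of this term is not optional.

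Second, for the hard term $\Pi(\angles{\nabla}^{-1}F,F)$ you propose dyadic decomposition and modulation analysis. The paper dispenses with all of that: the estimate \eqref{B3} reduces immediately to $\norm{uv}_{H^{s-1,a'-1}}\lesssim\norm{u}_{H^{r+1,b}}\norm{v}_{H^{r,b}}$, which is a direct application of the D'Ancona--Foschi--Selberg atlas (Theorem~\ref{ThmAtlas}), sitting exactly at the borderline $\sum s_i=1$. No angle or transversality argument is needed here; the sharpness of the exponents is shared between this estimate and the null-form estimate \eqref{Nu4} for $Q[A,A]$ in the $F$-equation, not determined by \eqref{B3} alone.

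Finally, in part (ii) the paper does not bootstrap the $F$-equation. After upgrading $A_\pm$ to $X_\pm^{1-\sigma,1-\sigma}$ via the Zhou iteration (using only the $A$-equation in its non-null form $\square A\sim A\partial A+A^3$), the regularity $F_\pm\in X_\pm^{-\sigma,1/2+\sigma}$ is read off directly from the curvature relation $F=\partial A-\partial A+[A,A]$ combined with the product estimates and the hyperbolic-weight identity \eqref{HypW}.
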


\begin{remark}
Local well-posedness for $(s,r)=
(1-\varepsilon, 0)$ is proved in \cite{st2013}. It will be clear from the estimates that
one in fact obtain local well-posedness for $(s, r)$ in some convex region containing the points $(\frac67+\varepsilon, -\frac1{14}+\varepsilon)$ and 
$(1-\varepsilon, 0)$, but we do not pursue this here.
\end{remark}

Let us fix some notation. We use $\lesssim$ to mean $\le$ up to multiplication by 
a positive constant $C$ which may depend on $s$, $r$ and $T$. If $A,B $ are nonnegative quantities, 
$A \sim B$ means $B \lesssim A \lesssim B$.

The rest of the paper is organized as follows. In the next Section, 
we reveal the null structure in the key bilinear terms 
and write the system \eqref{YMF2}--\eqref{YM4} in terms of the null forms.  
In Section \ref{Sec-Reduc}, we shall rewrite this new system as a first order system and 
reduce Theorem \ref{MainThm} to proving nonlinear estimates. 
In the rest of the Sections we prove the nonlinear estimates.

\section{Null structure in the bilinear terms}
For $\mathfrak g$-valued $u,v$, define a commutator version of null forms by 
\begin{equation}\label{CommutatorNullforms}
\left\{
\begin{aligned}
  Q_0[u,v] &= [\partial_\alpha u, \partial^\alpha v] = Q_0(u,v) - Q_0(v,u),
  \\
  Q_{\alpha\beta}[u,v] &= [\partial_\alpha u, \partial_\beta v] - [\partial_\beta u, \partial_\alpha v] = Q_{\alpha\beta}(u,v) + Q_{\alpha\beta}(v,u).
\end{aligned}
\right.
\end{equation}

 Note the identity
\begin{equation}\label{NullformTrick}
  [\partial_\alpha u, \partial_\beta u]
  = \frac12 \left( [\partial_\alpha u, \partial_\beta u] - [\partial_\beta u, \partial_\alpha u] \right)
  = \frac12 Q_{\alpha\beta}[u,u].
\end{equation}

Define 
\begin{equation}\label{NewNull} 
  \mathfrak Q[u,v] = - \frac12 \varepsilon^{ijk}\varepsilon_{klm} Q_{ij}\left[R^l u^m, v \right]
  - Q_{0i}\left[R^i u_0, v \right],
\end{equation}
where $\varepsilon_{ijk}$ is the antisymmetric symbol with $\varepsilon_{123} = 1$ and
$$R_i = \angles{\nabla}^{-1}\partial_i = (1-\Delta)^{-1/2}\partial_i$$ are the Riesz transforms.

We split the spatial part $\mathbf A=(A_1,A_2, A_3)$ of the potential into divergence-free and curl-free parts and a smoother part:
\begin{equation}\label{SplitA}  
\mathbf A = \mathbf A^{\text{df}} + \mathbf A^{\text{cf}} + \angles{\nabla}^{-2} \mathbf A,
\end{equation}
where
\begin{align*}
  \mathbf A^{\text{df}}&=  \angles{\nabla}^{-2} \nabla \times \nabla \times \mathbf A,
  \\
  \mathbf A^{\text{cf}}&= -\angles{\nabla}^{-2} \nabla (\nabla \cdot \mathbf A).
\end{align*}

\subsection{ Terms of the form $[A^\alpha,\partial_\alpha \phi]$ and $ [\partial_tA^\alpha, \partial_\alpha\phi]$ }

In the Lorenz gauge, terms of the form  $[A^\alpha,\partial_\alpha \phi] $, where $A_\alpha,\phi \in \mathcal S$
with values in $\mathfrak g$, can be shown to be as a sum of bilinear null forms 
and a smoother bilinear part whereas the term $ [\partial_tA^\alpha, \partial_\alpha\phi]$ is a null form.  
\begin{lemma}\label{Lemma-Null0} In the Lorenz gauge, we have 
the identities
\begin{align}
\label{Null0}
   [A^\alpha, \partial_\alpha \phi ] & =
  \mathfrak Q\left[\angles{\nabla}^{-1} A,\phi\right] + [\angles{\nabla}^{-2}  A^\alpha, \partial_\alpha \phi ],
  \\
  \label{Null1}
  [\partial_tA^\alpha, \partial_\alpha\phi]&=  Q_{0i}\left[A^i , \phi \right].
\end{align}

\end{lemma}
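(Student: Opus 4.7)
The plan is to verify both identities by direct algebraic manipulation, using only the Lorenz gauge condition $\partial^\alpha A_\alpha = 0$ and the definitions; no analysis is needed here.

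For \eqref{Null1} I would split the sum $[\partial_t A^\alpha, \partial_\alpha \phi]$ into its temporal and spatial contributions. The temporal piece is $[\partial_t A^0, \partial_0 \phi] = -[\partial_t A_0, \partial_t \phi]$, and differentiating the Lorenz gauge in time yields $\partial_t A_0 = \partial^i A_i$, which rewrites this as $-[\partial_i A^i, \partial_t \phi]$. Adding the spatial piece $[\partial_t A^i, \partial_i \phi]$, the result is $[\partial_t A^i, \partial_i \phi] - [\partial_i A^i, \partial_t \phi]$, which is exactly $Q_{0i}[A^i, \phi]$ when the definition in \eqref{CommutatorNullforms} is unfolded.

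For \eqref{Null0} I would start from the right-hand side and expand $\mathfrak{Q}[\langle\nabla\rangle^{-1}A, \phi]$ using its definition \eqref{NewNull}. Contracting $\varepsilon^{ijk}\varepsilon_{klm} = \delta^i_l \delta^j_m - \delta^i_m \delta^j_l$ in the $Q_{ij}$-piece reduces the double curl to a Laplacian plus a gradient-of-divergence, both composed with $\langle\nabla\rangle^{-2}$. Invoking the Lorenz gauge once more to substitute $\partial_j A^j = \partial_t A_0$ turns the gradient-of-divergence contribution into $\langle\nabla\rangle^{-2}\partial_i \partial_t A_0$, and this cancels exactly against the spatial term produced by expanding $Q_{0i}[\langle\nabla\rangle^{-2}\partial^i A_0, \phi]$. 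What remains is $[\langle\nabla\rangle^{-2}\Delta A^i, \partial_i \phi] + [\langle\nabla\rangle^{-2}\Delta A_0, \partial_t\phi]$ with appropriate signs, and I would finish by using $-\langle\nabla\rangle^{-2}\Delta = 1 - \langle\nabla\rangle^{-2}$ to split these into the target expression $[A^\alpha, \partial_\alpha \phi] - [\langle\nabla\rangle^{-2} A^\alpha, \partial_\alpha \phi]$.

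Conceptually, this is the null structure hiding in $[A^\alpha,\partial_\alpha\phi]$: the Lorenz gauge ties $\partial_t A_0$ to $\nabla\cdot\mathbf{A}$, and the decomposition \eqref{SplitA} of $\mathbf{A}$ into divergence-free, curl-free, and smoother parts routes the rough contributions of $[A^i,\partial_i\phi]$ and $[A^0,\partial_0\phi]$ into null forms of type $Q_{ij}$ and $Q_{0i}$, modulo a remainder two derivatives smoother. The only real thing to watch is the Minkowski sign convention ($A^0 = -A_0$, $\partial^0 = -\partial_t$) and the order of factors in the commutators; once these are tracked carefully, the proof is a short bookkeeping computation rather than an obstacle.
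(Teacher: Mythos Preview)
Your proposal is correct. For \eqref{Null1} you give the short computation that the paper omits (the paper simply cites \cite{st2013}), and your argument is exactly right once the Lorenz gauge relation $\partial_t A_0=\partial^i A_i$ is used.

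For \eqref{Null0} your route differs in organization from the paper's. The paper works from left to right: it invokes the decomposition \eqref{SplitA} of $\mathbf A$ into divergence-free, curl-free and smoother parts, computes the ordinary product $A^\alpha\partial_\alpha\phi$ piece by piece, identifies each piece with a $Q_{ij}$ or $Q_{0i}$ null form (this is where the $\varepsilon^{ijk}\varepsilon_{klm}$ structure enters, via the double-curl formula for $\mathbf A^{\mathrm{df}}$), and only at the end subtracts the mirror identity for $\partial_\alpha\phi\,A^\alpha$ to obtain the commutator version. You instead work from right to left, expanding $\mathfrak Q[\langle\nabla\rangle^{-1}A,\phi]$ directly at the commutator level, contracting $\varepsilon^{ijk}\varepsilon_{klm}$ to produce the Laplacian and the gradient-of-divergence, and using $-\langle\nabla\rangle^{-2}\Delta=1-\langle\nabla\rangle^{-2}$ to close. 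The two computations are algebraically equivalent, but yours is a bit leaner: it avoids doing the product calculation twice and makes the cancellation between the $Q_{ij}$ and $Q_{0i}$ contributions (the term $[\langle\nabla\rangle^{-2}\partial_i\partial_t A_0,\partial_i\phi]$) completely transparent. The paper's version, on the other hand, makes the geometric origin of the null structure via \eqref{SplitA} more visible.
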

\begin{proof}
To show \eqref{Null0} we modify the proof in \cite[Lemma 1 ]{st2013},
whereas \eqref{Null1} is proved in the same paper
(see identity (2.7) therein). 

Using \eqref{SplitA} we write
\begin{align*}
  A^\alpha \partial_\alpha \phi
  &=
  \left( - A_0 \partial_t \phi
  + \mathbf A^{\text{cf}} \cdot \nabla \phi \right)
  + \mathbf A^{\text{df}} \cdot \nabla \phi +  \angles{\nabla}^{-2} \mathbf A  \cdot \nabla \phi
 \end{align*}
Let us first consider the first term in the parentheses. 
We use the Lorenz gauge, $\partial_t A_0=\nabla \cdot \mathbf A  $,  to write
\begin{align*}
  \mathbf A^{\text{cf}} \cdot \nabla \phi&
  =-\angles{\nabla}^{-2} \partial^i(\partial_t A_0) \partial_i \phi=
  - \partial_t ( \angles{\nabla}^{-1} R^i A_0)\partial_i \phi.
\end{align*}
We can also write
\begin{align*}
 A_0 \partial_t \phi 
 &=-\angles{\nabla}^{-2}  \partial_i\partial^i A_0 \partial_t \phi+ \angles{\nabla}^{-2}  A_0 \partial_t \phi
  \\
  &=-\partial_i(\angles{\nabla}^{-1} R^i A_0) \partial_t \phi
  +  \angles{\nabla}^{-2}  A_0 \partial_t \phi.
\end{align*}
Combining the above identities, we get
\begin{align*}
 -A_0 \partial_t \phi +  \mathbf A^{\text{cf}} \cdot \nabla \phi
 &=Q_{i0}(\angles{\nabla}^{-1} R^i A_0, \phi)- \angles{\nabla}^{-2}  A_0 \partial_t \phi.
 \end{align*}

Next, we consider the second term. Since $$
(\mathbf A^{\text{df}})^i = \varepsilon^{ijk} \varepsilon_{klm} R_j  R^l A^m ,
$$
we have 
\begin{align*}
  \mathbf A^{\text{df}} \cdot \nabla \phi
  &=
  \varepsilon^{ijk} \varepsilon_{klm} (R_j  R^l A^m) \partial_i\phi
  \\
  &= \varepsilon^{ijk} \varepsilon_{klm} \partial_j ( \angles{\nabla}^{-1}   R^l A^m) ( \partial_i\phi)
  \\
  &=
  - \frac12 \varepsilon^{ijk}\varepsilon_{klm} Q_{ij}\left(\angles{\nabla}^{-1} R^l A_m, \phi\right).
\end{align*}

Thus, we have shown 
 \begin{equation}\label{Null2}
  \begin{split}
  A^\alpha \partial_\alpha \phi
  &=
  - \frac12 \varepsilon^{ijk}\varepsilon_{klm} Q_{ij}\left(\angles{\nabla}^{-1} R^l A_m, \phi\right)
  \\
  & \qquad + Q_{i0}(\angles{\nabla}^{-1} R^i A_0, \phi)+\angles{\nabla}^{-2}  A^\alpha \partial_\alpha \phi.
   \end{split}
  \end{equation} 
   Similarly, modifying the above argument one can show 
  \begin{equation}\label{Null3}
  \begin{split}
    \partial_\alpha \phi A^\alpha
    &=
   \frac12 \varepsilon^{ijk}\varepsilon_{klm} Q_{ij}\left( \phi, \angles{\nabla}^{-1} R^l A_m\right)
  \\
  & \qquad - Q_{i0}( \phi, \angles{\nabla}^{-1} R^i A_0)+ \partial_\alpha \phi \angles{\nabla}^{-2}  A^\alpha.
  \end{split}
  \end{equation}   
  Subtracting \eqref{Null2} and \eqref{Null3} yields \eqref{Null0}.
   \end{proof}

\subsection{ Terms of the form
$[A^\alpha,\partial_\beta A_\alpha]$ } In the Lorenz gauge, this term can be 
written as a sum of bilinear null form terms, bilinear terms which are smoother,  
a bilinear term which contains only $F$ and higher order terms in $(A, F)$.

\begin{lemma}\label{Lemma-null1} In the Lorenz gauge, we have 
the identity
\begin{align*}\label{Null4}
  [A^\alpha,\partial_\beta A_\alpha]&
=\sum_{i=1}^4\Gamma^i_\beta(A, \partial A, F, \partial F),
\end{align*}
\end{lemma}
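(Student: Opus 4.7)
The plan is to combine three ingredients, namely the curvature identity \eqref{Curvature}, the div-curl decomposition \eqref{SplitA}, and the Lorenz gauge condition, in order to recast $[A^\alpha,\partial_\beta A_\alpha]$ into the four prescribed pieces $\Gamma^i_\beta$. The first move is to use \eqref{Curvature} to swap the order of the derivatives: since $F_{\alpha\beta} = \partial_\alpha A_\beta - \partial_\beta A_\alpha + [A_\alpha, A_\beta]$, one has
\begin{equation*}
 [A^\alpha, \partial_\beta A_\alpha] = [A^\alpha, \partial_\alpha A_\beta] - [A^\alpha, F_{\alpha\beta}] + [A^\alpha, [A_\alpha, A_\beta]].
\end{equation*}
The cubic commutator on the right is obviously of "higher order" type and goes into one of the $\Gamma^i_\beta$. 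To the first term on the right I would apply Lemma \ref{Lemma-Null0} with $\phi = A_\beta$: identity \eqref{Null0} immediately produces the null form $\mathfrak{Q}[\angles{\nabla}^{-1}A, A_\beta]$ plus the smoother bilinear $[\angles{\nabla}^{-2}A^\alpha, \partial_\alpha A_\beta]$. These will account for the $\Pi(A,\partial A)$ null piece and the $\Pi(\angles{\nabla}^{-2}A, \angles{\nabla}A)$ smoother piece.

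The main work then concerns $-[A^\alpha, F_{\alpha\beta}]$, which must be written as an $\Pi(\angles{\nabla}^{-1}F, F)$ bilinear modulo smoother and higher order pieces. I would split into temporal $[A_0, F_{0\beta}]$ and spatial $[A^i, F_{i\beta}]$ contributions and feed the spatial one through the decomposition \eqref{SplitA}. The decisive observation is that the divergence-free piece $\mathbf{A}^{\text{df}} = \angles{\nabla}^{-2}\nabla\times\nabla\times\mathbf{A}$ is, in view of \eqref{Curvature}, an $\angles{\nabla}^{-1}$-smoothed Riesz transform of $F_{ij}$ up to a cubic correction, since $(\nabla\times\mathbf{A})_i = \tfrac12\varepsilon_{ijk}(F^{jk}-[A^j,A^k])$. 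Substituting this into $[(A^{\text{df}})^i,F_{i\beta}]$ yields precisely a $\Pi(\angles{\nabla}^{-1}F, F)$ bilinear plus a cubic-in-$A$ term multiplied by $F$. The remaining smoother piece $[\angles{\nabla}^{-2}A^i, F_{i\beta}]$ is already of the right form.

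The hard part, and the main obstacle, is handling the remaining temporal contribution $-[A_0, F_{0\beta}]$ together with the curl-free spatial contribution $[(A^{\text{cf}})^i, F_{i\beta}]$. Here the Lorenz gauge enters crucially: $\nabla\cdot\mathbf{A} = \partial_t A_0$ allows the rewriting $\mathbf{A}^{\text{cf}} = -\partial_t\angles{\nabla}^{-2}\nabla A_0$, and the elementary identity $A_0 = -\partial^i\angles{\nabla}^{-2}\partial_i A_0 + \angles{\nabla}^{-2}A_0$ lets one split $[A_0, F_{0\beta}]$ into a manifestly smoother piece plus one containing $\partial_i A_0$. Invoking \eqref{Curvature} once more, in the form $\partial_i A_0 = \partial_t A_i - F_{0i} + [A_0, A_i]$, converts the $\partial_i A_0$ factor into an $F$-factor (feeding the $\Pi(\angles{\nabla}^{-1}F, F)$ term), a cubic piece, and a lingering time-derivative term $[\partial_t\partial^i\angles{\nabla}^{-2}A_i, F_{0\beta}]$ that must be combined with $[(A^{\text{cf}})^i, F_{i\beta}]$; the expectation is that the combination either reorganizes into a $\Pi(\angles{\nabla}^{-1}A, A)$ bilinear (the $\Gamma^i_\beta$ category not yet used) or gets absorbed into cubic pieces once $F_{0\beta}$, $F_{i\beta}$ are themselves expanded via \eqref{Curvature}. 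The bookkeeping of these substitutions is routine but delicate; once it is done, collecting terms by type yields the four families $\Gamma^i_\beta$ claimed in the lemma.
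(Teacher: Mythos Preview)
Your route does not prove the lemma as stated. The $\Gamma^i_\beta$ in \eqref{Gammas} are specific algebraic expressions, not schematic placeholders of the type $\Pi(\cdots)$ from \eqref{YM5}; the lemma asserts the precise identity $[A^\alpha,\partial_\beta A_\alpha]=\Gamma^1_\beta+\Gamma^2_\beta+\Gamma^3_\beta+\Gamma^4_\beta$. The paper obtains it \emph{directly}: write $[A^\alpha,\partial_\beta A_\alpha]=-[A_0,\partial_\beta A_0]+[\mathbf A,\partial_\beta\mathbf A]$, insert the three–term splitting \eqref{SplitA} into \emph{both} copies of $\mathbf A$, and group the resulting cross terms as
\[
\Gamma^1_\beta=-[A_0,\partial_\beta A_0]+[\mathbf A^{\text{cf}},\partial_\beta\mathbf A^{\text{cf}}],\quad
\Gamma^2_\beta=[\mathbf A^{\text{cf}},\partial_\beta\mathbf A^{\text{df}}]+[\mathbf A^{\text{df}},\partial_\beta\mathbf A^{\text{cf}}],\quad
\Gamma^3_\beta=[\mathbf A^{\text{df}},\partial_\beta\mathbf A^{\text{df}}],
\]
with $\Gamma^4_\beta$ collecting the $\angles{\nabla}^{-2}\mathbf A$ cross terms. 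The Lorenz condition $\nabla\cdot\mathbf A=\partial_t A_0$ rewrites $\Gamma^1_\beta$ into the form in \eqref{Gammas}; the identity $\nabla B\cdot(\nabla\times C)=\tfrac12\varepsilon^{ijk}Q_{ij}(B,C_k)$ converts $\Gamma^2_\beta$ into the stated $Q_{ij}$ null form; and the relation $\nabla\times\mathbf A=\mathbf F-\mathbf A\times\mathbf A$ recasts $\Gamma^3_\beta$.

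Your first move, rewriting $[A^\alpha,\partial_\beta A_\alpha]=[A^\alpha,\partial_\alpha A_\beta]-[A^\alpha,F_{\alpha\beta}]+[A^\alpha,[A_\alpha,A_\beta]]$, goes in the wrong direction. Applying Lemma~\ref{Lemma-Null0} to $[A^\alpha,\partial_\alpha A_\beta]$ yields $\mathfrak Q[\angles{\nabla}^{-1}A,A_\beta]$, which is \emph{not} $\Gamma^2_\beta$ (compare the arguments: $\Gamma^2_\beta$ has $\angles{\nabla}^{-1}R$ on both slots and involves $\partial_\beta\mathbf A^m$, not $A_\beta$). The non-$Q$-type null form $\Gamma^1_\beta$---the specific cancellation between $-[A_0,\partial_\beta A_0]$ and the cf--cf pairing---never arises in your scheme. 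And the cubic $[A^\alpha,[A_\alpha,A_\beta]]$ you generate appears in none of the four $\Gamma^i_\beta$'s. Worse, when your identity is substituted back into \eqref{YM4} the extra $[A^\alpha,\partial_\alpha A_\beta]$ and cubic terms cancel against those already present and you land exactly on \eqref{YM3}; the whole point of the lemma (stated explicitly in the introduction) is to extract structure from $[A^\alpha,\partial_\beta A_\alpha]$ \emph{beyond} what \eqref{YM3} offers. Finally, the ``hard part'' you flag---pairing $-[A_0,F_{0\beta}]$ with $[(A^{\text{cf}})^i,F_{i\beta}]$ and then expanding $F$ again via \eqref{Curvature}---is circular: you reintroduce the very $[A,\partial A]$ terms you started from. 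The correct strategy is to never pass through $F_{\alpha\beta}$ at all, and instead decompose $\mathbf A$ symmetrically in both factors from the outset.
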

where
\begin{equation}\label{Gammas}
 \left\{
\begin{aligned}
\Gamma^1_\beta(A, \partial A,  F, \partial F)
&=-[A_0,\partial_\beta A_0] + 
[\angles{\nabla}^{-1} R_j (\partial_t A_0),  \angles{\nabla}^{-1} R^j  \partial_t (\partial_\beta A_0)] ,
\\
\Gamma^2_\beta(A, \partial A,  F, \partial F)
&= -\frac12 \varepsilon^{ijk} 
\varepsilon_{klm} \Big\{ Q_{ij}[\angles{\nabla}^{-1}  R^n \mathbf A_n ,\angles{\nabla}^{-1} R^l \partial_\beta\mathbf A^m ] \\
&\hspace{6em}+ 
Q_{ij}[\angles{\nabla}^{-1}  R^n \partial_\beta \mathbf A_n , \angles{\nabla}^{-1}  R^l \mathbf A^m ]
 \Big\},
\\
\Gamma^3_\beta(A, \partial A,  F, \partial F)&=[ \angles{\nabla}^{-2} \nabla \times \mathbf F,
\angles{\nabla}^{-2} \nabla \times \mathbf \partial_\beta \mathbf F]
\\
&\quad
-[\angles{\nabla}^{-2}\nabla \times \mathbf F , 
\angles{\nabla}^{-2} \partial_\beta \nabla \times( \mathbf A \times \mathbf A)]
\\
& \quad- [ \angles{\nabla}^{-2} \nabla \times( \mathbf A \times \mathbf A),
\angles{\nabla}^{-2}\nabla \times \mathbf \partial_\beta \mathbf F ] 
\\
&\quad+ [\angles{\nabla}^{-2}\nabla \times( \mathbf A \times \mathbf A),
\angles{\nabla}^{-2}\partial_\beta \nabla \times( \mathbf A \times \mathbf A) ],
\\
\Gamma^4_\beta(A, \partial A,  F, \partial F)&=[\mathbf A^{\text{cf}} + \mathbf A^{\text{df}}, \angles{\nabla}^{-2} \partial_\beta \mathbf A] 
+ [\angles{\nabla}^{-2}\mathbf A,  \partial_\beta \mathbf A  ].
 \end{aligned}
\right.
\end{equation}
Here $
\mathbf F=(F_{23}, F_{31}, F_{12}).
$

Thus, $\Gamma^2_\beta$ is a combination of the commutator version $Q$-type null forms.
The term $\Gamma^1_\beta$ is also a null form (of non $Q$-type) as can be shown as follows.

We write
\begin{align*}
\Gamma^1_\beta
&=\left(-A_0 \partial_\beta A_0 + 
\angles{ \nabla}^{-1} R_j (\partial_t A_0) \angles{\nabla}^{-1} R^j  \partial_t( \partial_\beta A_0)\right)
\\
& \quad + \left(  \partial_\beta   A_0 A_0 -  \angles{\nabla}^{-1} R_j\partial_t ( \partial_\beta A_0) 
\angles{\nabla}^{-1} R_j (\partial_t A_0)  \right)
\\
&=:R_1+R_2.
\end{align*}
Now if  we denote  the space-time Fourier variables of  the first  and second $A_0$  in the product in $R_1$ by $(\tau, \xi)$  and  $(\lambda, \eta)$, respectively,  where $\tau, \ \lambda \in \R$ are temporal frequencies and  $\xi, \ \eta \in \R^3$ spatial frequencies ,  then $R_1$ has symbol 
\begin{align}\label{Nullsym}
i\left(-1+ \frac{\tau \lambda}{\angles{\xi}^2\angles{\eta}^2} \xi\cdot \eta\right)\eta_\beta
=-i\angles{\xi}^{-2}\angles{\eta}^{-2}\left(\angles{\xi}^2\angles{\eta}^2-(\tau \lambda)(\xi\cdot \eta)\right)\eta_\beta .
\end{align}
If we write 
$$
\angles{\xi}^2\angles{\eta}^2-(\tau \lambda)(\xi\cdot \eta)=\left(\angles{\xi}^2\angles{\eta}^2-|\xi|^2|\eta|^2\right)+ \left(|\xi|^2|\eta|^2-\tau \lambda\xi\cdot \eta\right), 
$$
then the first term in parenthesis satisfies the estimate 
$$
\angles{\xi}^2\angles{\eta}^2-|\xi|^2|\eta|^2=1+ 
|\xi|^2+|\eta|^2\le \angles{\xi}^2+\angles{\eta}^2.
$$
Combined with \eqref{Nullsym}, this will imply a gain in two derivatives. 
On the other hand, the second term $\left(|\xi|^2|\eta|^2-(\tau \lambda)(\xi\cdot \eta)\right)$ vanishes if $(\tau, \xi)$ and $(\lambda, \eta)$ are parallel null vectors, i.e., if  $\tau=\pm |\xi|$ and $(\lambda, \eta)=c(\tau, \xi)$ for 
some $c\in \R$.   Thus, $R_1$ is a null form up to a smooth bilinear term.  A similar argument shows 
$R_2$ is also a null form up to a smooth bilinear term.

\begin{proof}[Proof of Lemma \ref{Lemma-null1} ]
Using \eqref{SplitA}, we write 
\begin{align*}
[A^\alpha,\partial_\beta A_\alpha]&=-[A_0, \partial_\beta A_0]+ [\mathbf A, \partial_\beta \mathbf A]
\\
&=:\sum_{i=1}^4\Gamma^i_\beta(A, \partial A, F, \partial F),
\end{align*}
where
\begin{align*}
\Gamma^1_\beta(A, \partial A, F, \partial F)
&=-[A_0, \partial_\beta A_0] + [\mathbf A^{\text{cf}},  \partial_\beta \mathbf A^{\text{cf}} ],
\\
\Gamma^2_\beta(A, \partial A, F, \partial F)
&= [\mathbf A^{\text{cf}}, \partial_\beta \mathbf A^{\text{df}}] + 
[\mathbf A^{\text{df}}, \partial_\beta \mathbf A^{\text{cf}}] ,
\\
\Gamma^3_\beta (A, \partial A, F, \partial F)
&= [\mathbf A^{\text{df}}, \partial_\beta \mathbf A^{\text{df}}] ,
\\
\Gamma^4_\beta(A, \partial A, F, \partial F)
&= [\mathbf A^{\text{cf}} + \mathbf A^{\text{df}}, \angles{\nabla}^{-2} \partial_\beta \mathbf A] 
+ [\angles{\nabla}^{-2}\mathbf A,  \partial_\beta \mathbf A  ].
\end{align*}
So $\Gamma^4_\beta$ is exactly as in \eqref{Gammas}. It remains to show that $\Gamma^i_\beta$ ($i=1,2,3$) 
can also be rewritten as in \eqref{Gammas}.

First consider  $\Gamma^1_\beta$. We have 
$$
\Gamma^1_\beta=(-A_0\partial_\beta A_0 + \mathbf A^{\text{cf}} \cdot \partial_\beta \mathbf A^{\text{cf}}
 ) + (\partial_\beta A_0 A_0 - \partial_\beta\mathbf A^{\text{cf}} \cdot \mathbf A^{\text{cf}})
$$
Using the Lorenz gauge, we write 
\begin{align*}
-A_0\partial_\beta A_0 + \mathbf A^{\text{cf}} \cdot \partial_\beta \mathbf A^{\text{cf}} 
&=-A_0\partial_\beta A_0 + 
\angles{\nabla}^{-2} \nabla(\nabla \cdot \mathbf A) \cdot \partial_\beta\angles{\nabla}^{-2}\nabla(\nabla \cdot \mathbf A) \\
&=-A_0\partial_\beta A_0 + \angles{\nabla}^{-2} \nabla(\partial_t A_0) \cdot \angles{\nabla}^{-2} \nabla\partial_t (\partial_\beta A_0) \\
&=-A_0(\partial_\beta A_0)+ 
\angles{\nabla}^{-1} R_j (\partial_t A_0)  \angles{\nabla}^{-1} R^j  \partial_t (\partial_\beta A_0).
\end{align*}
Similarly, one can write
\begin{align*}
\partial_\beta A_0 A_0 - \partial_\beta\mathbf A^{\text{cf}} \cdot \mathbf A^{\text{cf}} 
&=(\partial_\beta A_0) A_0-
  \angles{\nabla}^{-1} R^j  \partial_t (\partial_\beta A_0)\angles{\nabla}^{-1} R_j (\partial_t A_0).
\end{align*}
Summing up, we obtain the desired identity for  $\Gamma^1_\beta$.

Next, we consider  $\Gamma^2_\beta$. 
 Let 
$$
B=-\angles{\nabla}^{-2}(\nabla \cdot \mathbf A)=-\angles{\nabla}^{-1} R^n \mathbf A_n, \quad 
\mathbf{C}= \angles{\nabla}^{-2}\nabla \times \mathbf A.
$$
Then
\begin{align*}
 \mathbf A^{\text{cf}}\cdot \partial_\beta \mathbf A^{\text{df}}
&=\nabla B \cdot ( \nabla \times \partial_\beta \mathbf{C}) 
= (\nabla B\times  \nabla \partial_\beta \mathbf{C}_k)^k
\\
&= \frac12  \varepsilon^{ijk} Q_{ij}( B, \partial_\beta \mathbf{C}_k)
\\
&=-\frac12 \varepsilon^{ijk} 
\varepsilon_{klm} Q_{ij}(\angles{\nabla}^{-1}R^n \mathbf A_n , \angles{\nabla}^{-1} R^l \partial_\beta\mathbf A^m ),
\end{align*}
where we used 
$  \mathbf{C}_k= \varepsilon_{klm} \angles{\nabla}^{-1} R^l \mathbf A^m .$
Similarly, 
\begin{align*}
 \mathbf A^{\text{df}}\cdot \partial_\beta \mathbf A^{\text{cf}}
 &= \frac12  \varepsilon^{ijk} Q_{ij}(\partial_\beta \mathbf{C}_k, B)
 \\
&=-\frac12 \varepsilon^{ijk} 
\varepsilon_{klm} Q_{ij}( \angles{\nabla}^{-1}  R^l \partial_\beta\mathbf A^m, \angles{\nabla}^{-1} R^n \mathbf A_n ).
\end{align*}
The terms $\partial_\beta  \mathbf A^{\text{cf}}\cdot \mathbf A^{\text{df}}$ and
$\partial_\beta  \mathbf A^{\text{df}}\cdot \mathbf A^{\text{cf}}$ in $\Gamma_\beta^2$ 
can also be written as 
in $ \mathbf A^{\text{cf}}\cdot \partial_\beta \mathbf A^{\text{df}}$ and 
$ \mathbf A^{\text{df}}\cdot \partial_\beta \mathbf A^{\text{cf}}$, respectively,
except that now $\partial_\beta$ falls on $\mathbf A_n $ instead of $\mathbf A_m $. 
Combining these facts  gives the desired identity for  $\Gamma^2_\beta$.

Finally, we consider $\Gamma^3_\beta$. Using the  definition for $F_{ij}$, we
have $\nabla \times \mathbf A=\mathbf F -\mathbf A \times \mathbf A $, where  $
\mathbf F=(F_{23}, F_{31}, F_{12}).
$ This in turn implies
$$
\mathbf A^{\text{df}}= \angles{\nabla}^{-2} \{ \nabla \times \mathbf F - \nabla \times( \mathbf A \times \mathbf A )\}.
$$
Inserting this in place of $\mathbf A^{\text{df}}$ gives the desired identity for $\Gamma^3_\beta$.

\end{proof}

\subsection{The system \eqref{YMF2}--\eqref{YM4} in terms of the null forms}

Let us now look at the nonlinear terms in  \eqref{YMF2}--\eqref{YM4}.
In view of Lemma \ref{Lemma-Null0} the first, second and third bilinear terms in \eqref{YMF2} 
are null forms up to some smoother bilinear terms. 
By the identity \eqref{NullformTrick}, the fourth and fifth
terms are identical to $2Q_0[A_\beta,A_\gamma]$ and $Q_{\beta\gamma}[A^\alpha,A_\alpha]$, respectively.
   
By Lemma \ref{Lemma-Null0}, the first term in \eqref{YM4} is a null form up to some smoother bilinear terms. 
By Lemma \ref{Lemma-null1} the second term in \eqref{YM4} is a sum of bilinear null form terms, bilinear terms 
which are smoother,  
a bilinear term which contains only $F$ and higher order terms in $(A, F)$. 

In conclusion, the system \eqref{YMF2}--\eqref{YM4} can be written as
\begin{equation}\label{AF}
\left\{
\begin{aligned}
  \square A_\beta &=  \mathfrak M_\beta(A,\partial_t A,F,\partial_t F),
  \\
  \square F_{\beta\gamma} &=  \mathfrak N_{\beta\gamma}(A,\partial_t A,F,\partial_t F),
\end{aligned}
\right.
\end{equation}
where
\begin{gather*}
   \left\{
  \begin{aligned}
  \mathfrak M_\beta(A,\partial_t A,F,\partial_t F) &= -2 \mathfrak Q[\angles{\nabla}^{-1} A,A_\beta] +
  \sum_{i=1}^4\Gamma^i_\beta(A, \partial A, F, \partial F)-2[\angles{\nabla}^{-2}  A^\alpha, \partial_\alpha A_\beta ]
  \\
 &\quad  - [A^\alpha, [A_\alpha, A_\beta]],
   \end{aligned}
  \right.
  \\
  \left\{
  \begin{aligned}
  \mathfrak N_{ij}(A,\partial_t A,F,\partial_t F)
  = &- 2\mathfrak Q[\angles{\nabla}^{-1} A,F_{ij}]
  + 2\mathfrak Q[\angles{\nabla}^{-1} \partial_j A, A_i]- 2\mathfrak Q[\angles{\nabla}^{-1} \partial_i A, A_j] 
  \\
  & + 2Q_0[A_i , A_j]
  + Q_{ij}[A^\alpha,A_\alpha]-2[\angles{\nabla}^{-2}  A^\alpha, \partial_\alpha F_{ij} ]
  \\
  &+2[\angles{\nabla}^{-2}  \partial_jA^\alpha, \partial_\alpha A_{i} ]-2[\angles{\nabla}^{-2}  \partial_i A^\alpha, \partial_\alpha A_{j} ]
  \\
  & - [A^\alpha,[A_\alpha,F_{ij}]] + 2[F_{\alpha i},[A^\alpha,A_j]] - 2[F_{\alpha j},[A^\alpha,A_i]]
  \\
  & - 2[[A^\alpha,A_i],[A_\alpha,A_j]],
  \end{aligned}
  \right.
  \\
  \left\{
  \begin{aligned}
  \mathfrak N_{0i}(A,\partial_t A,F,\partial_t F)
  = &- 2\mathfrak Q[\angles{\nabla}^{-1} A,F_{0i}]
  + 2\mathfrak Q[\angles{\nabla}^{-1} \partial_i A, A_0]- 2 Q_{0j}[A^j,A_i] + 2Q_0[A_0 , A_i]
  \\
  &
  + Q_{0i}[A^\alpha,A_\alpha]-2[\angles{\nabla}^{-2}  A^\alpha, \partial_\alpha F_{0i} ]+2[\angles{\nabla}^{-2}  \partial_i A^\alpha, \partial_\alpha A_{0} ]
  \\
  & - [A^\alpha,[A_\alpha,F_{0i}]] + 2[F_{\alpha 0},[A^\alpha,A_i]] - 2[F_{\alpha i},[A^\alpha,A_0]]
  \\
  & - 2[[A^\alpha,A_0],[A_\alpha,A_i]]
  \end{aligned}
  \right.
\end{gather*}
and  $\Gamma^i_\beta(A, \partial A, F, \partial F)$ for $i=1, \cdots, 4$ are as in \eqref{Gammas}.

\section{Reduction of Theorem \ref{MainThm} to nonlinear estimates}\label{Sec-Reduc}

\subsection{Rewriting \eqref{AF} as a first order system}
We rewrite \eqref{AF} as a first order system and reduce Theorem \ref{MainThm} 
to proving nonlinear estimates in $X^{s,b}$--spaces. 
We subtract
$A$ and $F$ to each side of the equations in \eqref{AF} and 
hence replace 
the wave operator $\square$ by the Klein-Gordon operator $\square -1$ at
the expense of adding linear term on the right hand side. 
This is done to avoid the singular operator $\abs{\nabla}^{-1} $, and instead obtain the non-singular operator
 $\angles{\nabla}^{-1} $ in the change of variables
  $$(A,\partial_t A, F, \partial_t F) \to (A_+,A_-, F_+, F_-),$$  where
\begin{align*} 
 A_\pm & = \frac12 \left( A \pm \frac{1}{i\angles{\nabla}} \partial_t A \right),
 \quad 
 F_\pm  = \frac12 \left( F \pm \frac{1}{i\angles{\nabla}} \partial_t F \right).
\end{align*}
Equivalently,
\begin{equation}\label{Substitution}
 \left\{
  \begin{aligned}
  (A, \  \partial_t A)=
  \bigl( A_+ + A_-, \   i\angles{\nabla} (A_+ - A_-))
  \\
   (F, \  \partial_t F)=
  \bigl( F_+ + F_-, \   i\angles{\nabla} (F_+ - F_-)).
  \end{aligned}
  \right.
    \end{equation}
Then the system \eqref{AF} transforms to
 \begin{equation}\label{AF1}
\left\{
\begin{aligned}
  (i\partial_t \pm \angles{\nabla}) A_{ \pm} &= \mp\frac{1}{2 \angles{\nabla}} \mathfrak{M}'(A_+, A_-, F_+, F_-),
  \\
  (i\partial_t \pm \angles{\nabla}) F_\pm &= \mp\frac{1}{2 \angles{\nabla}} \mathfrak { N}' (A_+, A_-, F_+, F_-),
 \end{aligned}
\right.
\end{equation}
where 
\begin{equation}\label{NL1}
\left\{
\begin{aligned}
  \mathfrak{M}'(A_+, A_-, F_+, F_-) &=-A + \mathfrak M_{\beta}(A,\partial_t A,F,\partial_t F),
  \\
  \mathfrak { N}' (A_+, A_-, F_+, F_-)&= -F+\mathfrak N_{\beta\gamma}(A,\partial_t A,F,\partial_t F),
  \end{aligned}
\right.
\end{equation}
In the right-hand side of \eqref{NL1} it is understood that we use the substitution \eqref{Substitution} 
on $(A, F)$ and the arguments of $\mathfrak M$ and $\mathfrak N$. 

The initial data transforms to
\begin{equation}\label{AF1data}
\left\{
\begin{aligned}
  A_{ \pm}(0) &= a_{\pm} := \frac12 \left( a \pm \frac{1}{i\angles{\nabla}} \dot a \right) \in H^{s},
  \\
   F_\pm(0) &= f_\pm  :=
   \frac12 \left( f \pm \frac{1}{i\angles{\nabla}}  \dot f \right) \in H^r .
\end{aligned}
\right.
\end{equation}

\subsection{Spaces used: $X^{s,b}$-Spaces and their properties}
We prove local well-posedness of \eqref{AF1}--\eqref{AF1data} by iterating in the $X^{s,b}$-spaces adapted to the dispersive operators $i\partial_t \pm \angles{\nabla}$. These spaces are defined to be the completion of $\mathcal S(\R^{1+3})$ with respect to the norm
$$
  \norm{u}_{X^{s,b}_\pm} = \norm{\angles{\xi}^s \bigangles{-\tau \pm \angles{\xi}}^b \widetilde u(\tau,\xi)}_{L^2_{\tau,\xi}},
$$
where $\widetilde u(\tau,\xi) = \mathcal F_{t,x} u(\tau,\xi)$ is the space-time Fourier transform of $u(t,x)$.

 Let $X^{s,b}_\pm(S_T)$ denote the restriction space to a time slab $S_T = (-T,T) \times \R^3$ for $T>0$. We recall the fact that
 \begin{equation}\label{XC-Emb}
 X^{s,b}_\pm(S_T) \hookrightarrow C([-T,T];H^s) \quad \text{for} \ b > \frac12.
\end{equation}  
 Moreover, it is well known that the linear initial value problem
 $$
  (i\partial_t \pm \angles{\nabla}) u = G  \in X^{s,b-1+\varepsilon}_\pm(S_T), \qquad u(0) = u_0\in H^s,
$$
for any $s\in \R, \ b > \frac12$,  $\ 0<\varepsilon \ll 1$,  
has a unique solution
satisfying
 \begin{equation}\label{LinearEst}
\norm{u}_{X^{s,b}_\pm(S_T)} \le C \left( \norm{u_0}_{H^s} + T^{\varepsilon} \norm{G}_{X^{s,b-1+\varepsilon}_\pm(S_T)} \right)
\end{equation}  
for $0<T<1$.

In addition to $X^{s,b}_\pm$, we shall also need the wave-Sobolev spaces $H^{s,b}$, defined to be the completion of $\mathcal S(\R^{1+3})$ with respect to the norm
$$
  \norm{u}_{H^{s,b}} = \norm{\angles{\xi}^s \bigangles{\abs{\tau}-  \angles{\xi}}^b \widetilde u(\tau,\xi)}_{L^2_{\tau,\xi}}.
$$
We shall make a frequent use of the relations
\begin{equation}\label{HXH}
\left\{
\begin{alignedat}{2}
  \norm{u}_{H^{s,b}} &\le \norm{u}_{X^{s,b}_\pm}& \quad &\text{if $b \ge 0$},
  \\
  \norm{u}_{X^{s,b}_\pm} &\le \norm{u}_{H^{s,b}}& \quad &\text{if $b \le 0$}.
\end{alignedat}
\right.
\end{equation}
In particular, \eqref{HXH} allows  us to pass from estimates in $X^{s,b}_\pm$ to corresponding estimates in $H^{s,b}$.

\subsection{Reduction to nonlinear estimates using iteration}
Using \eqref{LinearEst} and a standard iteration argument, local well-posedness can be deduced from the following nonlinear 
estimates.

\begin{lemma}\label{Lemma-NlEs} Let $0<T<1$, $(a, b)=(\frac34+\delta, \frac12+\delta)$, 
$(a', b')=(a+\delta, b+\delta)$ and $(s, r)= (1-\varepsilon, 0) $ or 
$\left(\frac67+ \varepsilon, -\frac1 {14}+ \varepsilon \right) $ where $\varepsilon$ is sufficiently small
and $0<\delta\ll \varepsilon \ll 1$. Then we have the estimates
 \begin{equation}\label{Red1}
\left\{
\begin{aligned}
  \norm{   \mathfrak{M}'(A_+, A_-, F_+, F_-)}_{X^{s-1,a'-1}_\pm(S_T)}
  &\lesssim N (1+N^3),
  \\
  \norm{ \mathfrak{N}'(A_+, A_-, F_+, F_-)}_{X^{r-1,b'-1}_\pm(S_T)}
  &\lesssim N (1+N^3),
 \end{aligned}
\right.
\end{equation} 
where
\begin{align*}
 N& = \sum_\pm\left( \norm{A_{\pm} }_{X^{s, a}_\pm(S_T)} 
  + \norm{F_\pm}_{X^{r,b}_\pm (S_T)}\right).
\end{align*}
Moreover, we have the difference estimates
\begin{equation}\label{Red2}
\left\{
\begin{aligned}
  \norm{   \mathfrak{M}'(A_+, A_-, F_+, F_-)-\mathfrak{M}'(A'_+, A'_-, F'_+, F'_-)}_{X^{s-1,a'-1}_\pm(S_T)}
  &\lesssim  \sigma(1+N^3),
  \\
  \norm{ \mathfrak{N}'(A_+, A_-, F_+, F_-)-\mathfrak{N}'(A'_+, A'_-, F'_+, F'_-)}_{X^{r-1,b'-1}_\pm(S_T)}
  &\lesssim  \sigma(1+N^3),
 \end{aligned}
\right.
\end{equation} 
where
\begin{align*}
 \sigma& = \sum_\pm\left( \norm{A_{\pm}-A'_{\pm} }_{X^{s, a}_\pm(S_T)}
  + \norm{F_\pm-F'_{\pm}}_{X^{r,b}_\pm(S_T)}\right).
\end{align*}
\end{lemma}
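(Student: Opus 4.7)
My plan is to reduce both inequalities to a finite list of linear, bilinear, trilinear and quadrilinear bounds in wave-Sobolev spaces, and then verify each bound by invoking either a product estimate from \cite{dfs2012}, a classical commutator null-form estimate, or, for one exceptional term, a sharp bilinear estimate whose balance pins down the pair $(s,r)=(\tfrac67+\varepsilon,-\tfrac{1}{14}+\varepsilon)$. Since $a'-1$ and $b'-1$ are both negative, the second line of \eqref{HXH} gives $\norm{\cdot}_{X^{s-1,a'-1}_\pm}\le\norm{\cdot}_{H^{s-1,a'-1}}$, reducing the output side to wave-Sobolev, while the first line of \eqref{HXH} (applied with $a,b>\tfrac12$) and \eqref{Substitution} let us use the $H^{s,a}$- and $H^{r,b}$-norms of $A$ and $F$ as inputs. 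The linear pieces $-A$ and $-F$ in \eqref{NL1} are bounded trivially by monotonicity of norms and contribute the leading $N$. The remaining terms in $\mathfrak M'$ and $\mathfrak N'$ split into five families: commutator null-forms ($\mathfrak Q$-type, $Q_0[\cdot,\cdot]$, $Q_{ij}[\cdot,\cdot]$, and the symbol-level null pieces of $\Gamma^1_\beta$ and $\Gamma^2_\beta$), smoother bilinear remainders from the $\angles{\nabla}^{-2}$ correction in \eqref{SplitA} and from the two-derivative splitting around \eqref{Nullsym}, the bilinear $F\cdot\partial F$ piece inside $\Gamma^3_\beta$, purely cubic terms, and quartic terms.

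Families one and two are absorbed by the standard $X^{s,b}_\pm$ null-form estimates together with the product bounds in \cite{dfs2012}. Specifically, $Q_0$ and $Q_{\alpha\beta}$ enjoy the classical one-derivative gain, and $\mathfrak Q$, by its definition \eqref{NewNull}, is a combination of these composed with the Riesz operators $R^i$, which are bounded on every Sobolev scale; the $\Gamma^2_\beta$ piece moreover carries an additional $\angles{\nabla}^{-1}$ on each input, which is more than needed. For $\Gamma^1_\beta$ the symbolic identity \eqref{Nullsym} and the splitting $\angles{\xi}^2\angles{\eta}^2-\tau\lambda\,\xi\!\cdot\!\eta=(\angles{\xi}^2\angles{\eta}^2-|\xi|^2|\eta|^2)+(|\xi|^2|\eta|^2-\tau\lambda\,\xi\!\cdot\!\eta)$ reduce the estimate either to a two-derivative smoothing gain handled by \cite{dfs2012}, or to a genuine null form vanishing on parallel null pairs and handled by a $Q_0$-type bound. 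The smoother remainders are even easier: the $\angles{\nabla}^{-1}$ or $\angles{\nabla}^{-2}$ prefactors raise the input regularity enough that the generic product estimates apply with room to spare.

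The main obstacle is the bilinear $[\angles{\nabla}^{-2}\nabla\times F,\ \angles{\nabla}^{-2}\nabla\times\partial_\beta F]$ inside $\Gamma^3_\beta$, which is not a null form. Here we must prove an estimate of the schematic form $\norm{\angles{\nabla}^{-1}F_1\cdot\angles{\nabla}^{-1}\partial_\beta F_2}_{H^{s-1,a'-1}}\lesssim\norm{F_1}_{H^{r,b}}\norm{F_2}_{H^{r,b}}$, relying solely on the Riesz smoothing and on the sharpness of the wave-Sobolev bilinear bounds of \cite{dfs2012}; it is the balance in this single estimate that determines the choice $(s,r)=(\tfrac67+,-\tfrac{1}{14}+)$, and at the energy endpoint $(1-,0)$ it also recovers the result of \cite{st2013}. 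The cubic terms are then dispatched by iterating two bilinear $H^{s,b}$-product estimates, placing one factor in $L^\infty_tH^s_x$ via \eqref{XC-Emb} whenever convenient; the quartic terms follow by iterating once more, absorbing the extra factors by Sobolev embedding since $a,b>\tfrac12$. Finally, the difference estimates \eqref{Red2} follow immediately from \eqref{Red1} by the multilinear telescoping $B(u_1,\ldots,u_k)-B(u'_1,\ldots,u'_k)=\sum_j B(u'_1,\ldots,u'_{j-1},u_j-u'_j,u_{j+1},\ldots,u_k)$ applied separately to each bilinear, trilinear and quadrilinear piece, without requiring any new analysis.
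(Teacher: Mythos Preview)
Your plan is correct and follows the paper's approach closely: pass to $H^{s,b}$ via \eqref{HXH}, reduce the $Q$-type, $\mathfrak Q$-type and $\Gamma^2_\beta$ pieces to the null-form theorem of \cite{st2013}, treat $\Gamma^1_\beta$ via the symbol splitting around \eqref{Nullsym}, dispatch the $\angles{\nabla}^{-2}$-smoothed remainders and the cubic/quartic terms by iterated product estimates from \cite{dfs2012}, and obtain \eqref{Red2} by multilinear telescoping. One correction to your commentary (not to the logic): it is not the $\Gamma^3_\beta$ bound alone that pins down $(s,r)$; the paper identifies \emph{two} simultaneously sharp estimates, namely \eqref{B3} (your $\Gamma^3_\beta$ term, which forces $s\le 1+2r$) and \eqref{Nu4} (the null-form bound $\|Q[A,A]\|_{H^{r-1,b'-1}}\lesssim\|A\|_{X^{s,a}}^2$, which forces $r<4s-\tfrac72$), and the intersection of these two constraints is exactly $(\tfrac67,-\tfrac1{14})$---so the null-form family requires the same care as $\Gamma^3_\beta$, not merely ``room to spare''.
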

Then by iteration we obtain a solution $(A_+, A_-, F_+, F_-)$ of 
the transformed system \eqref{AF1}--\eqref{AF1data} on $S_T$ for $T>0$. The solution has the regularity
\begin{equation}
 \label{ReglX-soln}
 A_\pm \in X_\pm^{s, a}(S_T), \quad F_\pm \in X_\pm^{r, b}(S_T),
\end{equation}
and is unique in this space. By \eqref{XC-Emb} 
\begin{equation}
 \label{ReglC-soln}
 A_\pm \in C([-T,T]; H^{s}), \quad 
  F_\pm \in C([-T,T];  H^{r}).
\end{equation}
Continuous dependence of solution on the data and persistence of higher regularity data also follow from standard arguments
which we omit here. Once we have obtained the solution $(A_+, A_-, F_+, F_-)$ of the system \eqref{AF1},
we can then define $A=A_+ + A_-$ and $F=F_++ F_-$, and show that $(A, F)$ solves the original system \eqref{AF} (see \cite{st2013} for details).

Note that uniqueness of solution is only known only in the iteration space
\eqref{ReglX-soln} and not in  \eqref{ReglC-soln}.
But we can show that a solution
in the space \eqref{ReglC-soln} with $(s, r)=(1-\varepsilon, 0)$ also 
belongs to \eqref{ReglX-soln} with $(s, r)=\left(\frac67+ \varepsilon, -\frac1 {14}+ \varepsilon \right)  $ in which 
the solution is known to be unique. So unconditional uniqueness of the energy class solution follows from the following Lemma.
\begin{lemma}\label{Lemma-Unq}
Let $$ A_\pm \in C([-T,T]; H^{1-\varepsilon}), \quad 
  F_\pm \in C([-T,T];  L^2)
  $$
 be the solution to the system \eqref{AF1} 
 with initial data $(a_\pm, f_\pm)\in H^{1-\varepsilon}\times L^2$. Then 
 $$
 A_\pm \in X_\pm^{\frac67+ \varepsilon, \frac34 +\delta}(S_T), 
 \quad F_\pm \in X_\pm^{-\frac1{14}+\varepsilon, \frac12 +\delta}(S_T)
 $$
 where $\varepsilon$ and $\delta$ are as in Lemma \ref{Lemma-NlEs}.
\end{lemma}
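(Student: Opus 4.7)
The plan is to apply the Zhou \cite{z2000} bootstrap strategy flagged in the introduction. Starting from the a priori information $(A_\pm, F_\pm) \in C([-T,T]; H^{1-\varepsilon}) \times C([-T,T]; L^2)$, I would show that after finitely many iterations one can place $(A_\pm, F_\pm)$ in the smaller space $X_\pm^{\frac67+\varepsilon,\,\frac34+\delta} \times X_\pm^{-\frac1{14}+\varepsilon,\,\frac12+\delta}$, which is the iteration space of Theorem \ref{MainThm}(i); uniqueness of the $X^{s,b}$-solution constructed there then forces unconditional uniqueness of the energy class solution after restriction to a short time-slab.

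First, reduce to nonlinear estimates. Multiplying by a smooth time cutoff $\chi(t)$ supported near $[-T,T]$, Duhamel's formula for the first-order system \eqref{AF1} together with the linear estimate \eqref{LinearEst} reduce matters to bounding $\norm{\mathfrak{M}'}_{X_\pm^{s-1,b-1+\delta}}$ and $\norm{\mathfrak{N}'}_{X_\pm^{r-1,b-1+\delta}}$ at each successive triple $(s,r,b)$ of the bootstrap, the free part $\chi(t)e^{\mp it\angles{\nabla}}a_\pm$ being controlled in $X^{s,b}_\pm$ for every $b$ by the $H^s$-norm of the data.

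Second, kick off the bootstrap. The starting information provides $A_\pm \in L^\infty_tH^{1-\varepsilon} \hookrightarrow H^{1-\varepsilon,0} \subset X^{1-\varepsilon,-\frac12+\delta}_\pm$ and $F_\pm \in X^{0,-\frac12+\delta}_\pm$. Combining these inclusions with Strichartz-type estimates for the Klein–Gordon semigroup and the product estimates in $H^{s,b}$ of D'Ancona–Foschi–Selberg \cite{dfs2012}—which remain valid with $b \le \frac12$ provided the exponents balance—every bilinear, cubic and quartic term in $\mathfrak{M}'$, $\mathfrak{N}'$ can be estimated by placing one factor in the classical $L^\infty_tH^\sigma$ norm and the other in an appropriate Strichartz companion. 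This yields an initial gain $(A_\pm, F_\pm) \in X^{s_0,b_0}_\pm \times X^{r_0,b_0}_\pm$ with $b_0 > \frac12$.

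Third, iterate. Once $b > \frac12$, the full machinery underlying Lemma \ref{Lemma-NlEs}, together with asymmetric variants in which one factor is estimated in the still-available stronger classical norm, applies. Each round strengthens the $X^{s,b}$-control of $(A_\pm, F_\pm)$ by a fixed amount, and after finitely many steps the target regularity is reached. The main obstacle is the first step, where $b$ sits at the borderline $\frac12$: there the standard bilinear $X^{s,b}$-machinery degenerates, so the null structure uncovered in Lemmas \ref{Lemma-Null0} and \ref{Lemma-null1} is essential to recover the missing derivative, and the single non-null term $[A^\alpha, F_{\alpha\beta}]$ must be handled by exploiting the extra regularity of $F$, itself a consequence of $F$ solving a null-form wave equation \eqref{YMF2}. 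The cubic and quartic terms cause no trouble, since all factors sit at or above $H^{1-\varepsilon}$ spatially and fall within the range of standard Sobolev multiplication.
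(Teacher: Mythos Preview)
Your high-level plan—Zhou's iterative bootstrap from the energy class into the $X^{s,b}$ iteration space—matches the paper's, but your execution diverges in two significant respects, and one of your claims is in fact contrary to what the paper does.

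First, the paper does \emph{not} use any null structure in the proof of this lemma. It explicitly reverts to the un-decomposed nonlinearity
\[
M'(A_+,A_-)=-A-2[A^\alpha,\partial_\alpha A]+[A^\alpha,\partial A_\alpha]-[A^\alpha,[A_\alpha,A]],
\]
which depends on $A$ alone, and runs the entire bootstrap via physical-space Strichartz estimates (Lemma~\ref{LemmaStr}) together with H\"older and Sobolev. So your assertion that the null structure of Lemmas~\ref{Lemma-Null0}--\ref{Lemma-null1} is ``essential'' at the borderline first step is incorrect: the paper shows it is not needed at all for the uniqueness argument.

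Second, and relatedly, the paper never bootstraps $F$ through its wave equation $\mathfrak N'$. Because the expanded $A$-equation \eqref{YM4} is closed in $A$, the iteration is carried out for $A_\pm$ alone, with explicit sequences $s_m=1-\frac{1}{2^m+1}$, $b_m=1-\frac{1}{2^{m+1}+2}$, reaching the much stronger conclusion $A_\pm\in X_\pm^{1-\sigma,1-\sigma}(S_T)$. Only afterwards is $F_\pm$ recovered, and not from its own wave equation but directly from the curvature identity $F_{\alpha\beta}=\partial_\alpha A_\beta-\partial_\beta A_\alpha+[A_\alpha,A_\beta]$ combined with Theorem~\ref{ThmAtlas} and the hyperbolic-weight trick \eqref{HypW}. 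Your plan to bootstrap $(A,F)$ simultaneously through the coupled pair $\mathfrak M',\mathfrak N'$ is therefore unnecessarily entangled: $\mathfrak M'$ contains the $\Gamma^3_\beta$ terms involving $F$, so you would already need $X^{r,b}$ control on $F$ with $b>\tfrac12$ in order to re-enter the $A$-estimate, and you have not explained how that first gain for $F$ is obtained from $C_tL^2$ alone. The paper's decoupling sidesteps this circularity entirely.
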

The proof of Lemma \ref{Lemma-Unq} is given in the last Section.

\subsection{Further reduction of Lemma \ref{Lemma-NlEs} to null form and multilinear estimates} 
For the proof of Lemma \ref{Lemma-NlEs}, it suffices to show only \eqref{Red1} since the proof for \eqref{Red2} is similar. 

 The estimates for the linear terms in $ \mathfrak{M}' $ and $  \mathfrak{N}' $ are trivial, 
 and so we ignore them. Thus, we remain to prove the estimates for $ \mathfrak{M}$ and $ \mathfrak{N}$. So we reduce to 
\begin{equation}\label{Red11}
\left\{
\begin{aligned}
  \norm{   \mathfrak{M}(A_+, A_-, F_+, F_-)}_{X^{s-1,a'-1}_\pm}
  &\lesssim N (1+N^3),
  \\
  \norm{ \mathfrak{N}(A_+, A_-, F_+, F_-)}_{X^{r-1,b'-1}_\pm}
  &\lesssim N (1+N^3),
 \end{aligned}
\right.
\end{equation} 
where
\begin{align*}
 N& =
  \sum_\pm\left( \norm{A_{\pm} }_{X^{s, a}_\pm }
  + \norm{F_\pm}_{X^{r,b}_\pm }\right).
\end{align*}
To this end, to simplify notation, we write
 $$
 \norm{u}_{X^{s,b}} =\norm{u_+}_{X_+^{s,b}} +\norm{u_-}_{X_-^{s,b}} 
 $$
  for $u=u_++u_-$.
  
Now, looking at the terms in $ \mathfrak{M}$ and $ \mathfrak{N}$ and noting the fact that the Riesz transforms 
$R_i$ are bounded in the spaces involved, the estimates in \eqref{Red11} 
reduce to proving 
\begin{enumerate}[(i)]
 \item the  corresponding estimates for the null forms $Q=Q_0, Q_{0i}, Q_{ij}$:
\begin{align}
  \label{Nu1}
  \norm{Q[\angles{\nabla}^{-1} A, A]}_{H^{s-1,a'-1}}
  &\lesssim \norm{A}_{X^{s, a}} \norm{A}_{X^{s, a}},
  \\
    \label{Nu2}
  \norm{ Q_{ij}[\angles{\nabla}^{-1} A, \angles{\nabla}^{-1} \partial A]}_{H^{s-1, a'-1} }
  &\lesssim \norm{A}_{X^{s,a}}  \norm{A}_{X^{s,a}},
  \\
 \label{Nu3}
  \norm{ Q[\angles{\nabla}^{-1}A, F]}_{H^{r-1,b'-1} }
  &\lesssim \norm{A}_{X^{s,a}}  \norm{F}_{X^{r,b}},
    \\
  \label{Nu4}
  \norm{  Q[ A,   A]}_{H^{r-1, b'-1} }
  &\lesssim \norm{A}_{X^{s,a}}  \norm{A}_{X^{s, a}},
     \end{align}
    
\item the following estimates for $\Gamma^1$ (which is a non $Q$-type null form) and bilinear terms:
\begin{align}
  \label{Nu5}
  \norm{\Gamma^1( A, \partial A)}_{H^{s-1,a'-1}}
  &\lesssim\norm{A}_{X^{s,a}} \norm{A}_{X^{s,a}},
  \\
  \label{B1} 
   \norm{\Pi( A, \angles{\nabla}^{-2} \partial A  ) }_{H^{s-1,a'-1}}
     &\lesssim \norm{A}_{X^{s,a}} \norm{A}_{X^{s, a}},
     \\
     \label{B2} 
   \norm{ \Pi( \angles{\nabla}^{-2} A,  \partial A)   }_{H^{s-1,a'-1}}
     &\lesssim \norm{A}_{X^{s,a}} \norm{A}_{X^{s, a}},
     \\
  \label{B3} 
  \norm{\Pi(\angles{\nabla}^{-1} F, \angles{\nabla}^{-1} \partial F  ) }_{H^{s-1,a'-1}}
     &\lesssim \norm{F}_{X^{r,b}} \norm{F}_{X^{r, b}},
     \\
       \label{B4} 
   \norm{ \Pi( \angles{\nabla}^{-2} A,  \partial F)   }_{H^{r-1,b'-1}}
     &\lesssim \norm{A}_{X^{s,a}} \norm{F}_{X^{r, b}},
     \\
       \label{B5} 
   \norm{ \Pi( \angles{\nabla}^{-1} A,  \partial A)   }_{H^{r-1,b'-1}}
     &\lesssim \norm{A}_{X^{s,a}} \norm{A}_{X^{s, a}}
     \end{align}
and
\item the following trilinear and quadrilinear estimates:
 \begin{align}
   \label{T1}
   \norm{\Pi(\angles{\nabla}^{-1} F,\angles{\nabla}^{-1} \partial( AA) )}_{H^{s-1,a'-1}}
  &\lesssim  \norm{F}_{X^{r, b}} \norm{A}_{X^{s,a}} \norm{A}_{X^{s,a}},
  \\
   \label{T2}
   \norm{\Pi(\angles{\nabla}^{-1}\partial F, \angles{\nabla}^{-1}  ( AA) )}_{H^{s-1,a'-1}}
  &\lesssim  \norm{F}_{X^{r, b}} \norm{A}_{X^{s,a}} \norm{A}_{X^{s,a}},
  \\
  \label{Q1}
   \norm{\Pi(\angles{\nabla}^{-1}(AA), \angles{\nabla}^{-1} \partial ( AA)) }_{H^{s-1,a'-1}}
  &\lesssim  \norm{A}_{X^{s,a}} \norm{A}_{X^{s,a}} \norm{A}_{X^{s,a}} \norm{A}_{X^{s,a}},
  \\
   \label{T3} 
  \norm{\Pi(A,A,A)}_{H^{s-1,a'-1}}
  &\lesssim\norm{A}_{X^{s,a}} \norm{A}_{X^{s, a}} \norm{A}_{X^{s, a}},
  \\
   \label{T4}
  \norm{\Pi(A, A, F)}_{H^{r-1,b'-1}}
  &\lesssim \norm{A}_{X^{s,a}} 
  \norm{A}_{X^{s, a}}\norm{F}_{X^{r,b}},
  \\
   \label{Q2}
  \norm{\Pi(A,A, A, A)}_{H^{r-1, b'-1}}
  &\lesssim  \norm{A}_{X^{s,a}} \norm{A}_{X^{s,a}} \norm{A}_{X^{s,a}} \norm{A}_{X^{s,a}},
     \end{align}
where $\Pi(\cdots)$ denotes a multilinear operator in its arguments.
\end{enumerate}

\begin{remark}
 There is room for improvement in all of the estimates except 
 for \eqref{Nu4} and \eqref{B3} which seem to be sharp. 
 In fact, it will be clear from the estimates below that if not for one of these (or both) terms, 
 we could have improved our
 result. 
\end{remark}

\section{Proof of \eqref{Nu1}--\eqref{Nu4}  }
 
The matrix commutator null forms are linear combinations of the ordinary ones, 
in view of \eqref{CommutatorNullforms}. Since the matrix 
structure plays no role in the estimates under consideration, 
we reduce \eqref{Nu1}--\eqref{Nu4} to estimates to the ordinary null forms for $\mathbb C$-valued 
functions $u$ and $v$ (as in \eqref{OrdNullforms}).

Substituting
\begin{align*}
u&=u_+ + u_-, \quad \partial_t u = i\angles{\nabla}(u_+ - u_-),
\\
   v&=v_+ + v_-, \quad \partial_t v = i\angles{\nabla}(v_+ - v_-),
\end{align*}  
one obtains
\begin{align*}
  Q_0(u,v)
  &=
  \sum_{\pm,\pm'} (\pm 1)(\pm' 1)  \bigl[ \angles{D} u_{\pm} \angles{D} v_{\pm'} - (\pm D^i)u_{\pm} (\pm' D_i)v_{\pm'} \bigr],
  \\
  Q_{0i}(u,v)
  &=
  \sum_{\pm,\pm'} (\pm 1)(\pm' 1) \bigl[ - \angles{D} u_{\pm} (\pm' D_i)v_{\pm'} + (\pm D_i)u_{\pm} \angles{D}v_{\pm'} \bigr],
  \\
  Q_{ij}(u,v)
  &=
  \sum_{\pm,\pm'} (\pm 1)(\pm' 1) \bigl[ - (\pm D_i)u_{\pm} (\pm' D_j)v_{\pm'} + (\pm D_j)u_{\pm} (\pm' D_i)v_{\pm'} \bigr],
 \\
  Q_{ij}(u,\partial_tv)
  &=
  \sum_{\pm,\pm'} (\pm 1) \bigl[ - (\pm D_i)u_{\pm}  \angles{D} (\pm'D_j )v_{\pm'} +
  (\pm D_j)u_{\pm} \angles{D} (\pm' D_i)v_{\pm'} \bigr],
  \end{align*}
where
$$
  D = (D_1,D_2,D_3) = \frac{\nabla}{i}
$$
has Fourier symbol $\xi$. 
In terms of the Fourier symbols
\begin{align*}
  q_0(\xi,\eta) &= \angles{\xi}\angles{\eta} - \xi \cdot \eta,
  \\
  q_{0i}(\xi,\eta) &= - \angles{\xi} \eta_i + \xi_i \angles{\eta},
  \\
  q_{ij}(\xi,\eta) &= - \xi_i \eta_j + \xi_j \eta_i,
  \\
  q^t_{ij}(\xi,\eta) &=\angles{\eta} q_{ij}(\xi,\eta),
\end{align*}
we have
\begin{align*}
  Q_0(u,v) &= \sum_{\pm,\pm'} (\pm 1)(\pm' 1) B_{q_0(\pm \xi, \pm' \eta)}( u_{\pm}, v_{\pm'} ),
  \\
  Q_{0i}(u,v) &= \sum_{\pm,\pm'} (\pm 1)(\pm' 1) B_{q_{0i}(\pm \xi, \pm' \eta)}( u_{\pm}, v_{\pm'} ),
  \\
  Q_{ij}(u,v) &= \sum_{\pm,\pm'} (\pm 1)(\pm' 1) B_{q_{ij}(\pm \xi, \pm' \eta)}( u_{\pm},  v_{\pm'}),
  \\
   Q_{ij}(u,\partial_tv) &= \sum_{\pm,\pm'} (\pm 1)   B_{\angles{\eta} \cdot q_{ij}(\pm \xi, \pm' \eta)}( u_{\pm},  v_{\pm'}),
\end{align*}
where for a given symbol $\sigma(\xi,\eta)$ we denote by $B_{\sigma(\xi,\eta)}(\cdot,\cdot)$ the operator defined by
$$
  \mathcal F_{t,x} \left\{ B_{\sigma(\xi,\eta)}(u,v) \right\}(\tau,\xi) = \int \sigma(\xi-\eta,\eta) 
  \widetilde u(\tau-\lambda,\xi-\eta) \widetilde v(\lambda,\eta) \, d\lambda \, d\eta.
$$

The symbols appearing above satisfy the following estimates.
\begin{lemma}\label{Lmm-nullsyb} \cite{st2013} For all nonzero $\xi,\eta \in \R^3$,
\begin{align*}
  \abs{q_0(\xi,\eta)} &\lesssim \abs{\xi}\abs{\eta}\theta(\xi,\eta)^2 + \frac{1}{\min(\angles{\xi},\angles{\eta})},
  \\
  \abs{q_{0j}(\xi,\eta)} &\lesssim \abs{\xi}\abs{\eta}\theta(\xi,\eta) + \frac{\abs{\xi}}{\angles{\eta}} + \frac{\abs{\eta}}{\angles{\xi}},
  \\
  \abs{q_{ij}(\xi,\eta)} &\le \abs{\xi}\abs{\eta} \theta(\xi,\eta),
\end{align*}
\end{lemma}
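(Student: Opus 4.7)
The proof is by direct computation for each of the three symbols; in every case the strategy is the same: isolate the piece where a genuine angular cancellation is available, and bookkeep the remaining "elliptic" correction due to the replacement of $\abs{\cdot}$ by $\angles{\cdot}$.

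I would dispatch $q_{ij}$ first, since it requires no Japanese brackets at all. Because $q_{ij}(\xi,\eta)=\xi_j\eta_i-\xi_i\eta_j$ is (up to sign) the $(i,j)$-component of the wedge $\xi\wedge\eta$, a one-line calculation gives $\sum_{i<j} q_{ij}(\xi,\eta)^2=\abs{\xi}^2\abs{\eta}^2-(\xi\cdot\eta)^2=\abs{\xi}^2\abs{\eta}^2\sin^2\theta(\xi,\eta)$, so each component is bounded by $\abs{\xi}\abs{\eta}\abs{\sin\theta}\le\abs{\xi}\abs{\eta}\,\theta$.

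For $q_{0j}$ the plan is to introduce the unit vectors $\hat\xi=\xi/\abs{\xi}$ and $\hat\eta=\eta/\abs{\eta}$ and add/subtract to separate the angular part from the Klein–Gordon correction:
\begin{align*}
q_{0j}(\xi,\eta)&=\xi_j\angles{\eta}-\angles{\xi}\eta_j\\
&=\abs{\xi}\abs{\eta}(\hat\xi_j-\hat\eta_j)+\xi_j(\angles{\eta}-\abs{\eta})-\eta_j(\angles{\xi}-\abs{\xi}).
\end{align*}
The first summand is controlled by $\abs{\xi}\abs{\eta}\,\abs{\hat\xi-\hat\eta}\le 2\abs{\xi}\abs{\eta}\sin(\theta/2)\le\abs{\xi}\abs{\eta}\,\theta$, while for the remaining two I would use the identity $\angles{\zeta}-\abs{\zeta}=(\angles{\zeta}+\abs{\zeta})^{-1}\le\angles{\zeta}^{-1}$ to produce the claimed error terms $\abs{\xi}/\angles{\eta}$ and $\abs{\eta}/\angles{\xi}$.

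For $q_0$ the analogous splitting is
\begin{equation*}
q_0(\xi,\eta)=\bigl(\abs{\xi}\abs{\eta}-\xi\cdot\eta\bigr)+\bigl(\angles{\xi}\angles{\eta}-\abs{\xi}\abs{\eta}\bigr).
\end{equation*}
The first parenthesis is the classical null contribution $\abs{\xi}\abs{\eta}(1-\cos\theta)=2\abs{\xi}\abs{\eta}\sin^2(\theta/2)\le\tfrac12\abs{\xi}\abs{\eta}\,\theta^2$. For the second, the main tool is rationalization:
\begin{equation*}
\angles{\xi}\angles{\eta}-\abs{\xi}\abs{\eta}=\frac{\angles{\xi}^2\angles{\eta}^2-\abs{\xi}^2\abs{\eta}^2}{\angles{\xi}\angles{\eta}+\abs{\xi}\abs{\eta}}=\frac{1+\abs{\xi}^2+\abs{\eta}^2}{\angles{\xi}\angles{\eta}+\abs{\xi}\abs{\eta}}.
\end{equation*}
Splitting the numerator into its three summands and estimating each against the denominator (which dominates $\angles{\xi}\angles{\eta}$, and hence any product in which the larger of $\angles{\xi},\angles{\eta}$ appears with full weight) produces the desired $\angles{\min(\angles{\xi},\angles{\eta})}^{-1}$-type error.

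The only mildly subtle point—and the place where I expect the most care to be needed—is the rationalization step in the $q_0$ argument: one must chase which summand in the numerator gets paired with which factor in the denominator so that the final error is genuinely a lower-order contribution relative to $\abs{\xi}\abs{\eta}\theta^2$; everything else is routine Taylor-type bookkeeping using $\angles{\zeta}-\abs{\zeta}\lesssim\angles{\zeta}^{-1}$ and $\abs{\sin x}\le\abs{x}$.
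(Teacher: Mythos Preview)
Your arguments for $q_{ij}$ and $q_{0j}$ are correct and entirely standard; the paper itself does not prove this lemma but simply quotes it from the cited reference, so there is no proof in the paper to compare against.

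For $q_0$, however, the final step does not go through as you describe. Your rationalization correctly gives
\[
\angles{\xi}\angles{\eta}-\abs{\xi}\abs{\eta}=\frac{1+\abs{\xi}^2+\abs{\eta}^2}{\angles{\xi}\angles{\eta}+\abs{\xi}\abs{\eta}},
\]
but this quantity is \emph{not} $\lesssim 1/\min(\angles{\xi},\angles{\eta})$. Take $\xi=\eta$ with $\abs{\xi}=N\to\infty$: then $\theta=0$ and $q_0=\angles{\xi}^2-\abs{\xi}^2=1$, while $1/\min(\angles{\xi},\angles{\eta})=\angles{N}^{-1}\to 0$. No pairing of numerator summands against denominator factors can salvage this, because the rationalized expression is in fact comparable to $\max(\angles{\xi},\angles{\eta})/\min(\angles{\xi},\angles{\eta})$, not to $1/\min$.

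This is a typo in the stated lemma rather than a defect of your method. The error term actually used downstream in the paper (see the reduction immediately after Lemma~\ref{AngleEstimate}, where estimates for $Q(u,v)$ are reduced to ones for $B_\theta(\abs{\nabla}u,\abs{\nabla}v)$, $\angles{\nabla}u\,\angles{\nabla}^{-1}v$, and $\angles{\nabla}^{-1}u\,\angles{\nabla}v$) corresponds to the symbol bound $\abs{\xi}\abs{\eta}\theta^2+\angles{\xi}/\angles{\eta}+\angles{\eta}/\angles{\xi}$, exactly parallel to the $q_{0j}$ line. Your decomposition delivers precisely this: bound the numerator by $\angles{\xi}^2+\angles{\eta}^2$ and the denominator below by $\angles{\xi}\angles{\eta}$. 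So your proof is correct for the statement the paper actually needs; just do not claim the sharper $1/\min(\angles{\xi},\angles{\eta})$ bound, which is false.
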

where $\theta(\xi,\eta) = \arccos\left(\frac{\xi\cdot\eta}{\abs{\xi}\abs{\eta}}\right) \in [0,\pi]$
is the angle between $\xi$ and $\eta$.  It is this angle which quantifies the null structure in the bilinear terms. 
In particular, this angle satisfies the following estimate that allows us to trade in hyperbolic regularity 
and gain a corresponding 
amount of elliptic regularity.
\begin{lemma}\label{AngleEstimate} Let $\alpha,\beta,\gamma \in [0,1/2]$. Then for all pairs of signs $(\pm,\pm')$, all $\tau,\lambda \in \R$ and all nonzero $\xi,\eta \in \R^3$,
$$
  \theta(\pm\xi,\pm'\eta)
  \lesssim
  \left( \frac{\angles{\abs{\tau+\lambda}-\abs{\xi+\eta}}}{\min(\angles{\xi},\angles{\eta})} \right)^\alpha
  +
  \left( \frac{\angles{-\tau\pm\abs{\xi}}}{\min(\angles{\xi},\angles{\eta})} \right)^\beta
  +
  \left( \frac{\angles{-\lambda\pm'\abs{\eta}}}{\min(\angles{\xi},\angles{\eta})} \right)^\gamma.
$$
\end{lemma}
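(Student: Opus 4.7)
The plan is to prove the endpoint case $\alpha=\beta=\gamma=\tfrac12$ first, and then deduce the general estimate by interpolation against the trivial bound $\theta(\pm\xi,\pm'\eta)\le\pi$.

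For the endpoint, introduce the shorthand $h_0:=|\tau+\lambda|-|\xi+\eta|$, $h_1:=-\tau\pm|\xi|$, $h_2:=-\lambda\pm'|\eta|$, and observe that $\tau+\lambda=(\pm|\xi|)+(\pm'|\eta|)-h_1-h_2$. Comparing with $|\tau+\lambda|=|\xi+\eta|+h_0$ and distinguishing the two possible signs of $\tau+\lambda$, one obtains an algebraic identity of the form
\begin{equation*}
\varepsilon_1|\xi|+\varepsilon_2|\eta|+\varepsilon_3|\xi+\eta|=h_0+h_1+h_2,
\end{equation*}
where $\varepsilon_j\in\{\pm1\}$ are determined by the triple $(\pm,\pm',\sgn(\tau+\lambda))$. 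If all three $\varepsilon_j$ agree in sign the left-hand side has size $\gtrsim|\xi|+|\eta|$, which forces $\min(|\xi|,|\eta|)\lesssim|h_0|+|h_1|+|h_2|$ and so yields the desired bound for free via $\theta\le\pi$. In the remaining ``resonant'' patterns, I would invoke the law of cosines $|\xi+\eta|^2=|\xi|^2+|\eta|^2+2|\xi||\eta|\cos\theta(\xi,\eta)$ together with the relation $\theta(-\xi,\eta)=\pi-\theta(\xi,\eta)$ and $1-\cos\phi\sim\phi^2$, to establish the triangle-defect estimate
\begin{equation*}
\bigl|\varepsilon_1|\xi|+\varepsilon_2|\eta|+\varepsilon_3|\xi+\eta|\bigr|\gtrsim\frac{|\xi||\eta|}{|\xi|+|\eta|+|\xi+\eta|}\,\theta(\pm\xi,\pm'\eta)^2\gtrsim\min(|\xi|,|\eta|)\,\theta(\pm\xi,\pm'\eta)^2.
\end{equation*}
Combined with the identity this yields $\theta(\pm\xi,\pm'\eta)^2\lesssim(|h_0|+|h_1|+|h_2|)/\min(|\xi|,|\eta|)$. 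Replacing $|\cdot|$ with $\angles{\cdot}$ is immediate: if $\min(|\xi|,|\eta|)\lesssim1$ the right-hand side of the target inequality is already $\gtrsim1$. Taking square roots and using $\sqrt{a+b+c}\lesssim\sqrt a+\sqrt b+\sqrt c$ produces the endpoint.

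For general $\alpha,\beta,\gamma\in[0,\tfrac12]$, write $\rho_i:=\angles{h_i}/\min(\angles{\xi},\angles{\eta})$ for $i=0,1,2$. If $\rho_i\ge1$ for some $i$, then $\rho_i^{e_i}\ge1\gtrsim\theta$ for any exponent $e_i\ge0$, and the claim is trivial; otherwise every $\rho_i<1$, and since on $[0,1]$ one has $\rho^e\ge\rho^{1/2}$ for all $e\le\tfrac12$, the endpoint bound $\theta\lesssim\sum_i\rho_i^{1/2}$ immediately upgrades to $\theta\lesssim\sum_i\rho_i^{e_i}$ with $(e_0,e_1,e_2)=(\alpha,\beta,\gamma)$. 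The main obstacle is the case analysis on the sign patterns $(\pm,\pm',\sgn(\tau+\lambda))$ in the resonance identity: verifying that the triangle-defect bound applies to the angle $\theta(\pm\xi,\pm'\eta)$ attached to the \emph{specific} signs $\pm$ and $\pm'$ requires careful bookkeeping via the identities $\theta(-\xi,\eta)=\pi-\theta(\xi,\eta)$ and $\theta(-\xi,-\eta)=\theta(\xi,\eta)$, but each of the eight sub-cases is a direct computation.
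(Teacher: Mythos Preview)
Your argument is essentially correct and follows the standard route used in the literature (the paper itself does not supply a proof but cites \cite[Lemma~2.1]{Selberg2008}, where the argument proceeds exactly along the lines you sketch). One small inaccuracy: the algebraic identity you derive is not literally $\varepsilon_1|\xi|+\varepsilon_2|\eta|+\varepsilon_3|\xi+\eta|=h_0+h_1+h_2$, since the signs in front of $h_1,h_2$ also flip with $\sgn(\tau+\lambda)$; but only the bound $|\varepsilon_1|\xi|+\varepsilon_2|\eta|+\varepsilon_3|\xi+\eta||\le |h_0|+|h_1|+|h_2|$ is needed, and that holds regardless. The triangle-defect step and the interpolation to general exponents are both fine.
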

For a proof, see for example \cite[Lemma 2.1]{Selberg2008}.

In view of Lemma \ref{Lmm-nullsyb}, and since the norms we use only depend on 
the absolute value of the space-time Fourier transform, we can reduce any estimate for $Q(u,v)$ to
a corresponding estimate for the three expressions:
$$
  B_{\theta(\pm\xi,\pm'\eta)}(\abs{\nabla} u, \abs{\nabla} v),
  \quad
  \angles{\nabla} u \angles{\nabla}^{-1} v
  \quad \text{and} \quad
  \angles{\nabla}^{-1} u \angles{\nabla} v.
$$
Consequently, we can reduce \eqref{Nu1}--\eqref{Nu4} to the following:
\begin{equation}  \label{NLE}
\left\{
\begin{aligned}
  \norm{B_{\theta(\pm\xi,\pm'\eta)}(u, v)}_{H^{s-1,a'-1}}
  &\lesssim \norm{u}_{X^{s,a}_\pm} \norm{v}_{X^{s-1,a}_{\pm'}},
  \\
  \norm{B_{\theta(\pm\xi,\pm'\eta)}(u, v)}_{H^{r-1,b'-1}}
  &\lesssim \norm{u}_{X^{s,a}_\pm} \norm{v}_{X^{r-1,b}_{\pm'}},
  \\
  \norm{B_{\theta(\pm\xi,\pm'\eta)}(u, v)}_{H^{r-1,b'-1}}
  &\lesssim \norm{u}_{X^{s-1,a}_\pm} \norm{v}_{X^{s-1,a}_{\pm'}},
  \\
  \norm{uv}_{H^{s-1,a'-1}}
  &\lesssim \norm{u}_{H^{s, a}} \norm{v}_{H^{s+1,a}},
  \\
  \norm{uv}_{H^{s-1,a'-1}}
  &\lesssim \norm{u}_{H^{s+2,a}} \norm{v}_{H^{s-1,a}},
  \\
  \norm{uv}_{H^{r-1,b'-1}}
  &\lesssim \norm{u}_{H^{s,a}} \norm{v}_{H^{r+1, b}},
  \\
  \norm{uv}_{H^{r-1,b'-1}}
  &\lesssim \norm{u}_{H^{s+1,a}} \norm{v}_{H^{r-1, b }},
  \\
  \norm{uv}_{H^{r-1,b'-1}}
  &\lesssim \norm{u}_{H^{s-1,a}} \norm{v}_{H^{s+1,a}},
  \\
  \norm{uv}_{H^{r-1,b'-1}}
  &\lesssim \norm{u}_{H^{s+1,a}} \norm{v}_{H^{s-1,a}},
\end{aligned}
\right.
\end{equation}
where the first, fourth and fifth estimates are reductions of \eqref{Nu1}--\eqref{Nu2} and 
the rest comes from \eqref{Nu3}--\eqref{Nu4}.

The estimates in \eqref{NLE} follow from the following atlas of product estimates
in $H^{s,b}$--spaces which is due to D'Ancona, Foschi and Selberg \cite{dfs2012}
and null form estimates which is proved in \cite{st2013}.
In fact, the null form estimates follow from the estimate for the angles in Lemma \ref{AngleEstimate} and 
the product estimates in
Theorem \ref{ThmAtlas} below.
\begin{theorem}[Product estimates, \cite{dfs2012}]\label{ThmAtlas} Let $s_0,s_1,s_2 \in \R$ and $b_0,b_1,b_2 \ge 0$. Assume that
\begin{equation*}
\left\{
\begin{aligned}
  \sum b_i &> \frac12,
  \\
  \sum s_i &> 2 - \sum b_i,
  \\
  \sum s_i& > \frac32 - \min_{i \neq j}(b_i+b_j),
  \\
  \sum s_i& > \frac32 - \min(b_0 + s_1 + s_2, s_0 + b_1 + s_2, s_0 + s_1 + b_2),
  \\
  \sum s_i& \ge 1,
  \\
  \min_{i \neq j}(s_i+s_j) &\ge 0,
\end{aligned}
\right.
\end{equation*} 
and that the last two inequalities are \textbf{not both equalities}. Then
$$  
\norm{uv}_{H^{-s_0,b_0}} \lesssim \norm{u}_{H^{s_1,b_1}} \norm{v}_{H^{s_2,b_2}}
$$  
holds for all $u,v \in \mathcal S(\R^{1+3})$.
  \end{theorem}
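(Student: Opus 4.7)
The plan is to prove the atlas of product estimates via duality, dyadic decomposition in both frequency and modulation, and summation of a basic dyadic bilinear $L^2$ bound. Since the theorem catalogs nearly sharp sufficient conditions, the proof is essentially a case analysis built on top of a single bilinear building block.

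First, by Plancherel's theorem and duality, I would reduce $\norm{uv}_{H^{-s_0,b_0}} \lesssim \norm{u}_{H^{s_1,b_1}} \norm{v}_{H^{s_2,b_2}}$ to a symmetric trilinear form bound
\begin{equation*}
\left\vert \int F_1 F_2 F_3 \, M \, d\mu \right\vert \lesssim \norm{F_1}_{L^2}\norm{F_2}_{L^2}\norm{F_3}_{L^2},
\end{equation*}
where $F_i \geq 0$, $d\mu$ is surface measure on $\{\sum\tau_i = 0,\ \sum\xi_i = 0\}$, and $M = \prod_i \angles{\xi_i}^{-s_i}\angles{\abs{\tau_i}-\abs{\xi_i}}^{-b_i}$. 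Then I would dyadically decompose each $F_i$ by spatial frequency $\abs{\xi_i} \sim N_i$ and modulation $\abs{\abs{\tau_i}-\abs{\xi_i}} \sim L_i$, so that the estimate becomes a weighted dyadic sum, with weights $\prod_i N_i^{-s_i} L_i^{-b_i}$, of the $L^2$ norm of a dyadically localized triple product.

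The core building block is a bilinear $L^2$-type bound of the form
\begin{equation*}
\bignorm{P_{N_3,L_3}(\phi_{N_1,L_1}\psi_{N_2,L_2})}_{L^2_{t,x}} \lesssim C(N_1,N_2,N_3;L_1,L_2,L_3) \norm{\phi}_{L^2}\norm{\psi}_{L^2},
\end{equation*}
where the geometric factor $C$ is obtained by computing the area of intersection of thickened mass shells associated with the characteristic variety $\abs{\tau} = \abs{\xi}$, in the style of Foschi--Klainerman. With this in hand, the full estimate reduces to verifying that the resulting dyadic sum is finite under the stated hypotheses on $(s_i, b_i)$.

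The main content is then a case analysis on the dyadic sum. One partitions the configurations according to which $N_i$ is smallest (high-low versus high-high interaction) and which $L_i$ is smallest, and in each regime a different hypothesis of the theorem becomes binding: $\sum b_i > \tfrac12$ handles the fully low-modulation regime through Strichartz-type summation; the scaling condition $\sum s_i > 2 - \sum b_i$ controls generic high-frequency summation; $\sum s_i > \tfrac32 - \min_{i\neq j}(b_i+b_j)$ is needed in coherent parallel-frequency interactions with one small modulation; the mixed condition $\sum s_i > \tfrac32 - \min(b_0+s_1+s_2,\,s_0+b_1+s_2,\,s_0+s_1+b_2)$ treats the degenerate case where one modulation is so small that the corresponding factor is essentially a free wave; and $\sum s_i \geq 1$ and $\min_{i\neq j}(s_i+s_j) \geq 0$ prevent low-frequency and isolated-low-frequency divergences. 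The main obstacle is verifying that no regime is missed and that the endpoint restriction (the last two inequalities not both equalities) is exactly what avoids a logarithmic divergence at the critical scaling: when at least one inequality is strict a geometric series converges, whereas if both were equalities one would need a refined bilinear bound exploiting angular or modulation gains beyond what the raw geometric factor $C$ provides.
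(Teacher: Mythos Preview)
The paper does not prove Theorem~\ref{ThmAtlas}; it is quoted verbatim from D'Ancona--Foschi--Selberg \cite{dfs2012} and used as a black box throughout. So there is no ``paper's own proof'' to compare against here.

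That said, your sketch is an accurate high-level outline of how \cite{dfs2012} actually establishes the result: reduction by duality to a symmetric trilinear form, dyadic localization in both spatial frequency $N_i$ and modulation $L_i$, a geometric bilinear $L^2$ building block coming from thickened-cone intersection estimates, and then a case analysis on the dyadic sums that matches each hypothesis to the regime where it is binding. Your identification of which condition controls which regime is broadly right, and your remark that the ``not both equalities'' clause is exactly what rules out a logarithmic endpoint divergence is also correct. The main caveat is that what you have written is a strategy, not a proof: the actual argument in \cite{dfs2012} involves a substantial and careful case enumeration (low-high-high versus high-high-low, signs $\pm$ on each cone, which $L_i$ dominates, etc.), and verifying summability in every subcase is where all the work lies. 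If you were to submit this as a proof you would need to either carry out that enumeration in full or, as the present paper does, simply cite \cite{dfs2012}.
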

  
 \begin{theorem}[Null form estimates, \cite{st2013} ] \label{NullformThm}  
Let $\sigma_0,\sigma_1,\sigma_2,\beta_0,\beta_1,\beta_2 \in \R$. Assume that
\begin{equation*}
\left\{
\begin{aligned} \label{NullformThm:1}
 & 0 \le \beta_0 < \frac12 < \beta_1,\beta_2 < 1,
  \\
 & \sum \sigma_i + \beta_0 > \frac32 - (\beta_0 + \sigma_1 + \sigma_2),
  \\
 & \sum \sigma_i > \frac32 - (\sigma_0 + \beta_1 + \sigma_2),
  \\
&  \sum \sigma_i > \frac32 - (\sigma_0 + \sigma_1 + \beta_2),
  \\
  &\sum \sigma_i + \beta_0 \ge 1,
  \\
 & \min(\sigma_0 + \sigma_1, \sigma_0 + \sigma_2, \beta_0 + \sigma_1 + \sigma_2) \ge 0,
\end{aligned}
\right.
\end{equation*} 
and that the last two inequalities are \textbf{not both equalities}. Then we have the null form estimate
$$
  \norm{B_{\theta(\pm\xi,\pm'\eta)}(u,v)}_{H^{-\sigma_0,-\beta_0}}
  \lesssim
  \norm{u}_{X^{\sigma_1,\beta_1}_\pm} \norm{v}_{X^{\sigma_2,\beta_2}_{\pm'}}.
$$
\end{theorem}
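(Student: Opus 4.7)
The plan is to reduce the null form estimate to a finite number of applications of the product estimate in Theorem \ref{ThmAtlas} by using the angular bound of Lemma \ref{AngleEstimate} to trade hyperbolic regularity for elliptic regularity.

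Concretely, I would first apply Lemma \ref{AngleEstimate} with free parameters $\alpha, \beta, \gamma \in [0, 1/2]$ to be chosen, which decomposes the target estimate, via the triangle inequality, into three sub-estimates in which the symbol $\theta(\pm\xi,\pm'\eta)$ is replaced by a single term of the form $\Lambda_k^{\alpha_k}\min(\angles{\xi},\angles{\eta})^{-\alpha_k}$. Here $\Lambda_1 = \angles{|\tau+\lambda|-|\xi+\eta|}$ records the output modulation, while $\Lambda_2 = \angles{-\tau\pm|\xi|}$ and $\Lambda_3 = \angles{-\lambda\pm'|\eta|}$ record the input modulations of $u$ and $v$ respectively. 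In each sub-estimate I absorb the elliptic denominator by the pointwise bound $\min(\angles{\xi},\angles{\eta})^{-\alpha_k} \le \angles{\xi}^{-\alpha_k} + \angles{\eta}^{-\alpha_k}$, costing $\alpha_k$ units of elliptic regularity on either $u$ or $v$; and I absorb the modulation numerator into the hyperbolic weight of the corresponding factor (the output weight $\beta_0$ for $k=1$, the input weight $\beta_1$ for $k=2$, or $\beta_2$ for $k=3$). Using the relations \eqref{HXH}, which are free of charge on the inputs since $\beta_1,\beta_2 > 0$, each of the resulting six sub-cases reduces to a product estimate
\[
\norm{uv}_{H^{-s_0',b_0'}} \lesssim \norm{u}_{H^{s_1',b_1'}} \norm{v}_{H^{s_2',b_2'}}
\]
with parameters shifted from $(\sigma_i,\beta_i)$ by $\alpha$, $\beta$, or $\gamma$, which is then settled by invoking Theorem \ref{ThmAtlas}.

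The main obstacle is the exponent bookkeeping: one must choose $\alpha,\beta,\gamma > 0$ sufficiently small so that, simultaneously in all six sub-cases, the six hypothesis inequalities of Theorem \ref{ThmAtlas} are satisfied. The hypotheses of Theorem \ref{NullformThm} are calibrated precisely for this purpose---each of the listed conditions on $(\sigma_i,\beta_i)$ corresponds, after the appropriate shift, to one of the requirements of Theorem \ref{ThmAtlas} in one of the sub-cases, with the strictness of the inequalities providing the room to absorb small positive values of $\alpha,\beta,\gamma$. The stipulation that the last two inequalities are not both equalities descends directly from the identical stipulation in Theorem \ref{ThmAtlas}. Finally, the mismatch between the modulation weight $\angles{-\tau\pm|\xi|}$ appearing in Lemma \ref{AngleEstimate} and the weights $\angles{-\tau\pm\angles{\xi}}$ and $\angles{|\tau|-\angles{\xi}}$ defining $X^{s,b}_\pm$ and $H^{s,b}$ is harmless, since these are all comparable for $|\xi| \gtrsim 1$ and the low-frequency regime can be handled trivially.
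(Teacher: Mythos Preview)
Your proposal is correct and matches the paper's own approach: the paper does not give a self-contained proof of Theorem \ref{NullformThm} but simply remarks that ``the null form estimates follow from the estimate for the angles in Lemma \ref{AngleEstimate} and the product estimates in Theorem \ref{ThmAtlas},'' citing \cite{st2013} for the details. Your outline---applying Lemma \ref{AngleEstimate}, splitting the elliptic denominator via $\min(\angles{\xi},\angles{\eta})^{-\alpha_k} \le \angles{\xi}^{-\alpha_k}+\angles{\eta}^{-\alpha_k}$, absorbing each modulation numerator into the corresponding hyperbolic weight, and then invoking Theorem \ref{ThmAtlas} in each of the resulting sub-cases---is precisely the argument intended.
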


 \section{Proof of \eqref{Nu5}--\eqref{B5}}
 
 \subsection{Proof of \eqref{Nu5}}
We write $\Gamma^1=(\Gamma^1_0, \Gamma^1_1, \Gamma^1_2,\Gamma^1_3)$.
Using the substitution \eqref{Substitution}, we write
\begin{align*}
\Gamma^1_0(A, \partial A)
&=-A_0 \partial^i \mathbf A_i + 
\angles{ \nabla}^{-1} R_j (\partial_t A_0) \angles{\nabla}^{-1} R^j  \partial^i ( \partial_t \mathbf A_i)
\\
& \quad +  \partial^i \mathbf A_i A_0 -  \angles{\nabla}^{-1} R^j\partial^i ( \partial_t \mathbf A_i) 
\angles{\nabla}^{-1} R_j (\partial_t A_0) 
\\
&=- \sum_{\pm, \pm'} \Bigl\{ A_{0, \pm} \partial^i \mathbf A_{i, \pm'} + 
 R_j (\pm A_{0,  \pm}) 
R^j   (\pm' \partial^i  \mathbf A_{i, \pm'}  )       
\\
&\qquad \qquad -\partial^i \mathbf A_{i, \pm}A_{0, \pm'} -
R^j  (\pm \mathbf \partial^i \mathbf A_{i, \pm}  )  R_j (\pm' A_{0,  \pm'}) 
\Bigr\},
\end{align*}
where in the first two lines
 we used the Lorenz gauge 
condition $\partial_t A_0=\partial^i \mathbf A_i$ to replace $\partial_t A_0$ by $\partial^i \mathbf A_i$.
We do this in order to avoid
too much time derivative in the nonlinearity.
 
Similarly, 
\begin{align*}
\Gamma_i^1 (A, \partial A)
&=-A_0 \partial_i A_0 + 
\angles{\nabla}^{-1} R_j (\partial_t A_0) \angles{\nabla}^{-1} R^j  \partial_t (\partial_i A_0)
\\
&\qquad  + \partial_i A_0 A_0 - \angles{\nabla}^{-1} R^j  \partial_t (\partial_i A_0)
\angles{\nabla}^{-1} R_j (\partial_t A_0)
\\
&=  -\sum_{\pm, \pm'} \Bigl\{ A_{0, \pm} \partial_i A_{0, \pm'} + 
 R_j (\pm A_{0,  \pm}) 
R^j   (\pm'   \partial_i A_{0, \pm'}  )        \bigr.
\\
&\qquad \qquad -\partial_i A_{0, \pm}A_{0, \pm'} -
R^j   (\pm  \partial_iA_{0, \pm}  ) R_j (\pm' A_{0,  \pm'}) 
\Bigr\}.
\end{align*}

We can write
\begin{align}
\label{Nu5-0}
\Gamma^1_0(A, \partial A)
&= - \sum_{\pm, \pm'}\bigl\{ \mathcal P_{\pm, \pm'} ( A_{0, \pm},  \partial^i\mathbf A_{i, \pm'} )
-
 \mathcal P_{\pm, \pm'} ( \partial^i \mathbf A_{i, \pm}, A_{0, \pm'} )\bigr\},
 \\
 \label{Nu5-i}
 \Gamma_i^1 (A, \partial A)&=    -\sum_{\pm, \pm'}\bigl\{ \mathcal P_{\pm, \pm'} ( A_{0, \pm}, 
 \partial_i A_{0, \pm'} )-
 \mathcal P_{\pm, \pm'} ( \partial_i  A_{0, \pm}, A_{0, \pm'} )\bigr\},
\end{align}
where
\begin{align}
\label{Nu5-1}
\mathcal P_{\pm, \pm'} (u, v)=uv+  
R_j    (\pm u )  R^j (\pm' v)
 \end{align}
which has symbol
\begin{align}
\label{Nu5-symb}
 p_{\pm, \pm'} (\eta, \zeta)=1-\frac{ (\pm \eta )\cdot (\pm'\zeta)}{\angles{\eta} \angles{\zeta}}.
 \end{align}
This symbol satisfies the estimate
\begin{align}
\label{Nu5-symbE}
 |p_{\pm, \pm'} (\eta, \zeta)|\lesssim
  \theta^2(\pm \eta, \pm' \zeta )+ 
 \frac{ 1 }{\angles{\eta} \angles{\zeta}}.
 \end{align}
Indeed, if $\pm=+$ and $\pm'=\pm$, then
\begin{align*}
 p_{+, \pm} (\eta, \zeta) = 1-\frac{  \eta \cdot ( \pm \zeta) }{\angles{\eta} \angles{\zeta}}
 = \frac{( |\eta||\zeta|- \eta \cdot (\pm\zeta)) +
 (\angles{\eta} \angles{\zeta}-|\eta||\zeta|) }{\angles{\eta} \angles{\zeta}}.
  \end{align*}
But $$
|\eta||\zeta|- \eta \cdot (\pm \zeta)\le |\eta||\zeta|(1- \cos(\theta( \eta, \pm \zeta )))\lesssim  
|\eta||\zeta|\theta^2( \eta,  \pm\zeta )
$$
and 
$$
\angles{\eta} \angles{\zeta}-
|\eta||\zeta|=\frac{1+|\eta|^2+|\zeta|^2}{\angles{\eta} \angles{\zeta}+ |\eta||\zeta|  } \le 1.
$$
Then the estimate \eqref{Nu5-symbE} follows.

So in view of \eqref{Nu5-0}--\eqref{Nu5-symbE} and by symmetry, the estimate \eqref{Nu5} can be reduced to the following:
\begin{equation*}
\left\{
\begin{aligned} 
  \norm{B_{\theta(\pm\xi,\pm'\eta)}(u, v)}_{H^{s-1,a'-1}}
  &\lesssim \norm{u}_{X^{s,a}_\pm} \norm{v}_{X^{s-1,a}_{\pm'}},
  \\
  \label{Nu5-2}
   \norm{uv}_{H^{s-1,a'-1}}
  &\lesssim \norm{u}_{H^{s+1,a}} \norm{v}_{H^{s,a}},
      \end{aligned}
\right.
\end{equation*}
where the first estimate reduces to the first estimate in \eqref{NLE} while the second estimate 
holds by Theorem \ref{ThmAtlas}.
 
 \subsection{Proof of \eqref{B1}--\eqref{B5}}
Using \eqref{Substitution}, if necessary, the estimates \eqref{B1}--\eqref{B5} reduce to
\begin{equation*}
\left\{
\begin{aligned} 
   \norm{uv }_{H^{s-1,a'-1}}
     &\lesssim \norm{u}_{H^{s,a}} \norm{v}_{H^{s+1, a}}
     \\
   \norm{ uv  }_{H^{s-1,a'-1}}
     &\lesssim \norm{u}_{H^{s+2,a}} \norm{v}_{H^{s-1, a}}
     \\
  \norm{uv  }_{H^{s-1,a'-1}}
     &\lesssim \norm{u}_{H^{r+1,b}} \norm{v}_{H^{r, b}}
     \\
   \norm{uv     }_{H^{r-1,b'-1}}
     &\lesssim \norm{u}_{H^{s+2,a}} \norm{v}_{H^{r-1, b}}
     \\
   \norm{ uv  }_{H^{r-1,b'-1}}
     &\lesssim \norm{u}_{H^{s+1,a}} \norm{v}_{H^{s-1, a}},
     \end{aligned}
\right.
\end{equation*}
     all of which hold by Theorem \ref{ThmAtlas}.

\subsection{Proof of \eqref{T1}--\eqref{Q2} }

 The estimates \eqref{T1}--\eqref{Q2} reduce to the following:
\begin{align}
  \label{T1-1}
   \norm{u  \angles{\nabla }^{-1}( vw) }_{H^{s-1,a'-1}}
  &\lesssim \norm {u}_{H^{r+1,b}} \norm{v}_{H^{s,a}} \norm{w}_{H^{s-1,a}},
  \\
  \label{T2-1}
   \norm{u \angles{\nabla }^{-1}(vw) }_{H^{s-1,a'-1}}
  &\lesssim \norm{ u}_{H^{r,b}} \norm{v}_{H^{s,a}} \norm{w}_{H^{s,a}},
  \\
  \label{Q1-1}
   \norm{\angles{\nabla }^{-1}(uv)\angles{\nabla }^{-1}( wz) }_{H^{s-1,a'-1}}
  &\lesssim \norm{u}_{H^{s,a}} \norm{v}_{H^{s,a}} \norm{w}_{H^{s,a}} \norm{z}_{H^{s-1,a}}
  \\
  \label{T3-1}
  \norm{uvw   }_{H^{s-1,a'-1}}
  &\lesssim \norm{u}_{H^{s,a}} \norm{v}_{H^{s,a}} \norm{w}_{H^{s,a}},
  \\
  \label{T4-1}
    \norm{uvw   }_{H^{r-1,b'-1}}
  &\lesssim \norm{u}_{H^{s,a}} \norm{v}_{H^{s,a}} \norm{w}_{H^{r,b}},
  \\
  \label{Q2-1}
   \norm{uvwz   }_{H^{r-1,b'-1}}
  &\lesssim \norm{u}_{H^{s,a}} \norm{v}_{H^{s,a}} \norm{w}_{H^{s,a}} \norm{z}_{H^{s,a}}.
 \end{align}

The trilinear estimates \eqref{T1-1}, \eqref{T2-1}, \eqref{T3-1} and \eqref{T4-1} follows by applying Theorem \ref{ThmAtlas}
twice as follows:
  \begin{align*}
   \norm{u \angles{\nabla }^{-1}( vw) }_{H^{s-1,a'-1}}
  &\lesssim \norm {u}_{H^{r+1,b}} \norm{\angles{\nabla }^{-1}(vw)}_{H^{s-\frac12,0}} 
  \\
  &=\norm {u}_{H^{r+1,b}} \norm{vw}_{H^{s-\frac32,0}} 
  \\
  &\lesssim \norm {u}_{H^{r+1,b}} \norm{v}_{H^{s,a}} \norm{w}_{H^{s-1,a}},
  \\
 \norm{u \angles{\nabla }^{-1}( vw) }_{H^{s-1,a'-1}}
     &\lesssim \norm {u}_{H^{r, b}} \norm{\angles{\nabla }^{-1}(vw)}_{H^{s+\frac12,0}} 
  \\
  &=\norm {u}_{H^{r, b}} \norm{vw}_{H^{s-\frac12,0}} 
  \\
  &\lesssim \norm {u}_{H^{r,b}} \norm{v}_{H^{s,a}} \norm{w}_{H^{s,a}},
  \\ 
  \norm{uvw   }_{H^{s-1,a'-1}}
  &\lesssim \norm{u}_{H^{s,a}} \norm{vw}_{H^{s-\frac12,0}} 
  \\
 &\lesssim\norm{u}_{H^{s,a}} \norm{v}_{H^{s,a}} \norm{w}_{H^{s,a}},
 \\
 \norm{uvw   }_{H^{r-1,b'-1}}
  &\lesssim \norm{uv}_{H^{s-\frac12,0}} \norm{w}_{H^{r,b}}
  \\
 &\lesssim\norm{u}_{H^{s,a}} \norm{v}_{H^{s,a}} \norm{w}_{H^{r,b}}.
     \end{align*}
 The quadrilinear \eqref{Q1-1} and  \eqref{Q2-1} follow by applying Theorem \ref{ThmAtlas} and \cite[Theorem 8.1]{dfs2012}
follows:
      \begin{align*}
     \norm{\angles{\nabla }^{-1}(uv)\angles{\nabla }^{-1}( wz) }_{H^{s-1,a'-1}}
  &\lesssim \norm{\angles{\nabla }^{-1}(uv)}_{H^{s+\frac27,\frac12 }} \norm{
  \angles{\nabla }^{-1}( wz)}_{H^{0, 0 }} 
  \\
  &= \norm{uv}_{H^{s-\frac57,\frac12 }} 
  \norm{  wz }_{H^{-1, 0 }} 
  \\
   &\lesssim \norm{u}_{H^{s,a}} \norm{v}_{H^{s,a}} \norm{w}_{H^{s,a}} \norm{z}_{H^{s-1,a}},
   \\
   \norm{uvwz }_{H^{r-1,b'-1}}&\lesssim  \norm{uv }_{H^{\frac27, \frac14} } \norm{uvwz }_{H^{\frac27, \frac14}}
   \\
  &\lesssim \norm {u}_{H^{s, a}}   \norm {v}_{H^{s, a}} \norm {w}_{H^{s, a}} \norm {z}_{H^{s, a}}.
     \end{align*}
     
 \section{Proof of Lemma \ref{Lemma-Unq}}
 By assumption the solution $(A_\pm, F_\pm)$ to the system \eqref{AF1} lies in the regularity class
\begin{equation}\label{EnergyCl}
 A_\pm \in C([-T,T]; H^{1-\varepsilon}), \quad 
  F_\pm \in C([-T,T];  L^2).
\end{equation}
 For sufficiently small $\sigma$ such that $1\gg \sigma \gg \varepsilon$, we claim
 \begin{align}
  \label{AX}
  A_\pm \in X_\pm^{1-\sigma, 1-\sigma}(S_T),
  \\
  \label{FX}
  F_\pm \in X_\pm^{-\sigma, \frac12+\sigma}(S_T),
 \end{align}
which are in fact far better than the desired regularities in Lemma \ref{Lemma-Unq}.
 
  To prove \eqref{AX} we start with the assumed regularity for $A_\pm$ in \eqref{EnergyCl}
  and use the equations for $A_\pm$ to successively improve the regularity by
applying Strichartz, H\"{o}lder and Sobolev inequalities until we reach \eqref{AX}. 
The regularity of $F_\pm$ in \eqref{FX}
is determined from \eqref{AX} and the relation  \eqref{Curvature} by applying the product estimates in
Theorem \ref{ThmAtlas}. We do not need the null structures in the nonlinear terms, and so we write the equations 
for $A_\pm$ in the original form.

Consider the solutions $A_\pm$ to the equations
 \begin{equation}\label{Schem}
  (i\partial_t \pm \angles{\nabla}) A_{ \pm} = \mp\frac{1}{2 \angles{\nabla}} M'(A_+, A_-),
\end{equation}
where 
$$
M'(A_+, A_-)=-A - 2[A^\alpha,\partial_\alpha A] + [A^\alpha,\partial A_\alpha] - 
  [A^\alpha, [A_\alpha,A]].
$$
We have thus replaced $\mathfrak{M}'$ in \eqref{AF1} which was written in terms
of the null forms by $M'$ which is the same but in its original form.
Now write 
$$
A_\pm= A_\pm^{(0)} +  A_\pm^{(1)} +  A_\pm^{(2)},
$$
where $ A_\pm^{(0)} $
is the homogeneous part while $  A_\pm^{(1)} $ and $ A_\pm^{(2)} $ are the inhomogeneous parts
corresponding to the linear and nonlinear terms, respectively.
First note that by \eqref{LinearEst} and assumption \eqref{EnergyCl}
 $$
  A_\pm^{(0)} , \   A_\pm^{(1)} \in X_\pm^{1-\varepsilon, 1}(S_T)
 $$
 which imply (also using \eqref{EnergyCl})
 $$
  A_\pm^{(2)}=A_\pm- A_\pm^{(0)} -  A_\pm^{(1)} \in X_\pm ^{1-\varepsilon , 0}(S_T).
 $$
 
 It remains to prove
 \begin{equation}
  \label{Clm}
  A_\pm^{(2)} \in X_\pm^{1-\sigma, 1-\sigma}(S_T).
 \end{equation}

 To this end,
we start with \eqref{EnergyCl} and use th following Strichartz estimates to successively
improve the regularity. 
\begin{lemma}\label{LemmaStr}\cite[Lemma 7.1]{Selberg2010b} Suppose $2 < q \le \infty$ and $2 \le r < \infty$ satisfy $\frac12 \le \frac{1}{q} + \frac{1}{r} \le 1$. Then
$$
  \norm{u}_{L_t^q L_x^r}
  \lesssim
  \bignorm{\abs{\nabla}^{1-\frac{2}{r}}u}_{H^{0,1 - (\frac{1}{q} + \frac{1}{r}) + \gamma }}
$$
holds for any $\gamma > 0$.
\end{lemma}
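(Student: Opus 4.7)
The plan is to obtain the estimate by interpolating between two endpoint estimates: a trivial Plancherel identity at one end, and the sharp Klein-Gordon Strichartz estimate combined with the standard transfer principle at the other.

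First, I would recall the classical Strichartz estimate for the free Klein-Gordon half-wave propagator in $\R^{1+3}$: for any wave-admissible pair $(q_0,r_0)$ satisfying $\frac{1}{q_0}+\frac{1}{r_0}=\frac12$ with $2<q_0\le \infty$ and $2\le r_0<\infty$ (excluding the endpoint $(2,\infty)$),
$$
\bignorm{e^{\pm it\langle D\rangle}f}_{L^{q_0}_tL^{r_0}_x}\lesssim \bignorm{\abs{\nabla}^{1-2/r_0}f}_{L^2}.
$$
Writing $u=u_++u_-$ according to the $\pm$-wave decomposition and using the representation
$$
u_\pm(t,x)=\int_{\R} e^{it\lambda}\,e^{\pm it\langle D\rangle}f^\pm_\lambda(x)\,d\lambda,\qquad \widetilde{f^\pm_\lambda}(\xi)=\widetilde{u_\pm}(\lambda\pm\langle\xi\rangle,\xi),
$$
Minkowski's inequality and Cauchy--Schwarz in $\lambda$ yield the transferred estimate
$$
\norm{u}_{L^{q_0}_tL^{r_0}_x}\lesssim \bignorm{\abs{\nabla}^{1-2/r_0}u}_{H^{0,\frac12+\gamma}},
$$
for any $\gamma>0$, the summability of $\int\langle\lambda\rangle^{-1-2\gamma}d\lambda$ being what forces $b>\frac12$ in this step. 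This is the admissible endpoint. At the trivial endpoint we have Plancherel: $\norm{u}_{L^2_{t,x}}=\norm{u}_{H^{0,0}}$.

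Given $(q,r)$ in the hypothesis, set $c=\frac1q+\frac1r\in[\frac12,1]$ and $\theta=2(1-c)\in[0,1]$. Choose the admissible pair $(q_0,r_0)$ by
$$
\frac{1}{q}=\frac{1-\theta}{2}+\frac{\theta}{q_0},\qquad \frac{1}{r}=\frac{1-\theta}{2}+\frac{\theta}{r_0},
$$
which forces $\frac{1}{q_0}+\frac{1}{r_0}=\frac12$, so $(q_0,r_0)$ is admissible. A direct calculation gives
$$
\theta\left(1-\tfrac{2}{r_0}\right)=1-\tfrac{2}{r},\qquad \theta\left(\tfrac12+\gamma\right)=(1-c)+2\gamma(1-c),
$$
so $\theta(\tfrac12+\gamma)$ equals $1-c+\gamma'$ for any prescribed $\gamma'>0$ upon choosing $\gamma$ suitably. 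Complex interpolation of the mixed Lebesgue spaces $[L^2_tL^2_x,\,L^{q_0}_tL^{r_0}_x]_\theta=L^q_tL^r_x$ and of the weighted $L^2$ target spaces $[H^{0,0},\,\abs{\nabla}^{1-2/r_0}H^{0,\frac12+\gamma}]_\theta=\abs{\nabla}^{1-2/r}H^{0,1-c+\gamma'}$ then produces the claimed inequality.

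The main obstacle lies in justifying the interpolation step for the weighted $L^2$ spaces: $H^{0,b}$ is the weighted $L^2_{\tau,\xi}$ space with the non-smooth weight $\langle\abs{\tau}-\langle\xi\rangle\rangle^b$, and a priori complex interpolation of the spaces $\abs{\nabla}^\alpha H^{0,b}$ is not automatic. This is handled cleanly by first splitting $u=u_++u_-$ and passing to the $X^{s,b}_\pm$-norms built from the smooth symbol $\langle-\tau\pm\langle\xi\rangle\rangle^b$, for which the weights $\langle\xi\rangle^s$ and $\langle-\tau\pm\langle\xi\rangle\rangle^b$ are just product $A_2$-type weights on $L^2_{\tau,\xi}$; complex interpolation then operates linearly on the exponents by the standard Stein--Weiss theorem. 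Reassembling the $\pm$-components via the triangle inequality and invoking \eqref{HXH} returns the estimate in the original $H^{0,b}$ form.
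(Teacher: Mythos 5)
The paper offers no proof of this lemma --- it is quoted verbatim from \cite[Lemma 7.1]{Selberg2010b} --- and the argument given there is essentially the one you describe: interpolate the identity map between the trivial Plancherel case $q=r=2$ and the sharp wave-admissible line $\frac1{q_0}+\frac1{r_0}=\frac12$, the latter obtained from free Strichartz estimates via the standard $X^{s,b}$ transfer principle. Your exponent bookkeeping checks out (one verifies that $(q_0,r_0)$ determined by your $\theta=2(1-c)$ is always a non-endpoint sharp admissible pair precisely because $q>2$ and $r\ge 2$), and your worry about interpolating the weighted $L^2$ targets is unfounded: the Stein--Weiss change-of-density theorem gives $[L^2(w_0),L^2(w_1)]_\theta=L^2(w_0^{1-\theta}w_1^\theta)$ for arbitrary positive weights, so no $A_2$ condition or smoothness of $\langle\abs{\tau}-\angles{\xi}\rangle$ is needed. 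The one point you gloss over is that the free Strichartz estimate with the homogeneous weight $\abs{\nabla}^{1-2/r_0}$ is the classical statement for the wave propagator $e^{\pm it\abs{D}}$, whereas the transfer here is along the Klein--Gordon characteristic $\tau=\pm\angles{\xi}$; at high frequencies the two agree, but the low-frequency regime requires a separate (easy) argument, e.g.\ Bernstein in $x$ together with the Schr\"odinger-type dispersion of $e^{\pm it\angles{D}}$ on frequencies $\abs{\xi}\lesssim 1$, under which wave-admissible pairs are strictly subcritical. With that remark added, your proof is complete and coincides with the cited one.
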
 
 In what follows we always assume $\gamma$ to be sufficiently small.

 We also need the following simple inequality: 
  \begin{equation}\label{Lbnz}
  \norm{f|\nabla|g}_{L^2} \lesssim 
  \norm{|\nabla|^{\varepsilon}f|\nabla|^{1-\varepsilon}g}_{L^2} +
  \norm{ f|\nabla|^{1-\varepsilon} g}_{H^\varepsilon}.
 \end{equation} 
 Indeed, let the Fourier variables of $f$ and $g$ be $\eta$ and $\xi-\eta$, 
respectively,  
so that $\xi$ is the output frequency for $f|\nabla|g$. 
Now, if $\abs{\xi}\lesssim \abs{\eta}\sim \abs{\xi-\eta}$ then
the left hand side of \eqref{Lbnz} is bounded by the first term on the right hand side, while if 
$\abs{\eta}\lesssim \abs{\xi-\eta}\sim \abs{\xi}$ or $\abs{\xi-\eta}\lesssim \abs{\eta}\sim \abs{\xi}$ it is 
bounded by the second term on the right hand side.

\subsection{First estimate for $A_\pm^{\text{(2)}}$}
 We claim that
\begin{equation}\label{A1}
  A_\pm^{\text{(2)}} \in X_\pm^{\frac23-2\varepsilon , \frac56-2\varepsilon}(S_T).
  \end{equation}
By \eqref{LinearEst} and using \eqref{Substitution}, it suffices to show 
  $$
  A|\nabla| A , \ A^3 \in X_\pm^{-\frac13-2\varepsilon , -\frac16-2\varepsilon}(S_T). 
  $$
   In view of \eqref{Lbnz}, we reduce to proving
  $$
  A|\nabla|^{1-\varepsilon}A \in  X_\pm^{-\frac13-\varepsilon , -\frac16-2\varepsilon}(S_T), \quad 
  |\nabla|^{\varepsilon} A|\nabla|^{1-\varepsilon} ,
  \ A^3 \in X_\pm^{-\frac13-2\varepsilon , -\frac16-2\varepsilon}(S_T).
  $$
  
 Using Lemma \ref{LemmaStr} and duality, we have for some $q'<2$
 \begin{align*}
  \norm{A|\nabla|^{1-\varepsilon} A}_{X_\pm^{-\frac13-\varepsilon , -\frac16-2\varepsilon} (S_T)}
  &\lesssim \norm{A|\nabla|^{1-\varepsilon} A }_{ L_t^{q'} L_x^{\frac6{4+3\varepsilon } }(S_T)}\\
  &\lesssim \norm{A }_{L_t^\infty L_x^{\frac6{1+3\varepsilon}} (S_T)} 
  \norm{|\nabla|^{1-\varepsilon} A }_{L_t^\infty L_x^2 (S_T)}
  \\
  &\lesssim \norm{A }_{L_t^\infty H^{1-\varepsilon} (S_T)}^2,
 \end{align*}
where we used Sobolev embedding in the last inequality.
Similarly,
 \begin{align*}
  \norm{|\nabla|^{\varepsilon} A|\nabla|^{1-\varepsilon} A}_{X_\pm^{-\frac13-2\varepsilon , -\frac16-2\varepsilon}}
  &\lesssim \norm{|\nabla|^{\varepsilon} A|\nabla|^{1-\varepsilon} A }_{ L_t^{q'} L_x^{\frac3{2+3\varepsilon }}(S_T)}\\
  &\lesssim \norm{|\nabla|^{\varepsilon}A }_{L_t^\infty L_x^{\frac6{1+6\varepsilon}} (S_T)}
  \norm{|\nabla|^{1-\varepsilon} A }_{L_t^\infty L_x^2(S_T)}
  \\
  &\lesssim \norm{A }_{L_t^\infty H^{1-\varepsilon}(S_T)}^2
 \end{align*}
 and
 \begin{align*}
 \norm{A^3}_{X_\pm^{-\frac13-2\varepsilon , -\frac13-2\varepsilon}(S_T)}
  &\lesssim \norm{ A^3 }_{ L_t^{q'} L_x^{\frac3{2+3\varepsilon }}(S_T)}
  \lesssim \norm{A }^3_{L_t^\infty L_x^{\frac9{2+3\varepsilon}}(S_T)} 
  \lesssim \norm{A }_{L_t^\infty H^{1-\varepsilon}(S_T)}^3.
 \end{align*}

\subsection{Inductive estimates} We claim that, for $m=1,2,\dots$,
\begin{equation}\label{Am}
  A_\pm^{(2)} \in X_\pm^{s_m-\varepsilon_m,b_m-\varepsilon_m}(S_T) ,
\end{equation}
where
$$
  s_m = 1-\frac{1}{2^m+1},
  \quad
  b_m = 1-\frac{1}{2^{m+1}+2}, \quad \varepsilon_m=(m+1)\varepsilon.
$$
Granted this claim we can then choose $m$ sufficiently large to obtain \eqref{Clm}.

Note that the claim  \eqref{Am} is true for $m=1$ by \eqref{A1}. We shall prove that if \eqref{Am} holds for some $m \ge 1$, 
then it holds for $m+1$ also.

Interpolating \eqref{Am} with $A_\pm^{\text{(2)}} \in X_\pm^{1-\varepsilon,0}(S_T)$, we get
$$
  A_\pm^{\text{(2)}} 
  \in X_\pm^{\theta(1-\varepsilon) + (1-\theta)(s_m-\varepsilon_m),(1-\theta)(b_m-\varepsilon_m)}(S_T)
$$
for all $0 \le \theta \le 1$ and $\varepsilon > 0$. Take $\theta = \frac{2^m}{2^{m+1}+1}$ to obtain
\begin{equation}\label{Am1}
  A_\pm^{\text{(2)}} \in X_\pm^{s_{m+1}-\varepsilon_m,\frac12-\varepsilon_m}(S_T).
\end{equation}
Then by Lemma \ref{LemmaStr} 
\begin{equation}\label{Am11}
  A_\pm^{\text{(2)}} \in L_t^qL_x^r(S_T), \quad  
  |\nabla|^{\varepsilon} A_\pm^{\text{(2)}} \in  L_t^{ \frac{2q}{2-q\varepsilon }}L_x^{\frac{2r}{2+r\varepsilon }
 } (S_T),
\end{equation}
where
$$
\frac1q=\frac{2^{m+1}}{2^{m+2}+2}+\frac{\varepsilon_m}2+\gamma, \quad 
\frac1r=\frac{1}{2^{m+2}+2}+\frac{\varepsilon_m}2.
$$
Recall also that $A_\pm^{\text{(0)}}$ and $A_\pm^{\text{(1)}}$ has better regularity than \eqref{Am1}.
This in turn implies they also satisfy the property \eqref{Am11}, and we can therefore conclude
\begin{equation}\label{Am12}
  A_\pm \in L_t^qL_x^r(S_T), \quad  
  |\nabla|^{\varepsilon} A_\pm \in  L_t^{ \frac{2q}{2-q\varepsilon }}L_x^{\frac{2r}{2+r\varepsilon }
 } (S_T).
\end{equation}

 Using \eqref{LinearEst}, the induction claim
$$
 A_\pm^{(2)} \in X_\pm^{s_{m+1}-\varepsilon_{m+1},b_{m+1}-\varepsilon_{m+1}}(S_T)
$$
reduces to 
$$
A|\nabla| A,  \quad A^3 \in  X_\pm^{-1+s_{m+1}-\varepsilon_{m+1},-1+b_{m+1}-\varepsilon_{m+1}}(S_T).
$$

For the quadratic term $A|\nabla| A$, in view of \eqref{Lbnz}, it suffices to show
\begin{align*}
 A|\nabla|^{1-\varepsilon} A &\in  X_\pm^{-1+s_{m+1}-\varepsilon_{m},-1+b_{m+1}-\varepsilon_{m+1}}(S_T),
 \\
|\nabla|^{\varepsilon} A|\nabla|^{1-\varepsilon} A &\in 
X_\pm^{-1+s_{m+1}-\varepsilon_{m+1},-1+b_{m+1}-\varepsilon_{m+1}}(S_T).
\end{align*}
By Lemma \ref{LemmaStr} and duality, we thus reduce to proving for some $l\le 2$
 $$
 A|\nabla|^{1-\varepsilon} A  \in L_t^lL_x^{\frac{2r}{2+r}}(S_T),
 \quad |\nabla|^{\varepsilon} A|\nabla|^{1-\varepsilon} A, \ A^3  \in L_t^lL_x^{\frac{2r}{2+r(1+\varepsilon)}}(S_T).
 $$
But by H\"{o}lder and Sobolev inequality
 \begin{align*}
 \norm{ A|\nabla|^{1-\varepsilon} A }_{ L_t^{l} L_x^{\frac{2r}{2+r }} (S_T)}
    \lesssim \norm{ A }_{ L_t^q L_x^r (S_T)} \norm{A }_{L_t^\infty H^{1-\varepsilon} (S_T)},
 \end{align*}
  \begin{align*}
 \norm{  |\nabla|^{\varepsilon}A|\nabla|^{1-\varepsilon} A }_{ L_t^{l} L_x^{\frac{2r}{2+r(1+\varepsilon) }}(S_T)}
  \lesssim \norm{ |\nabla|^{\varepsilon} A }_{L_t^{ \frac{2q}{2-q\varepsilon }}L_x^{\frac{2r}{2+r\varepsilon }}(S_T)}
  \norm{A }_{L_t^\infty H^{1-\varepsilon}(S_T)}
   \end{align*}
   and
 \begin{align*}
 \norm{A^3}_{L_t^{l} L_x^{\frac{2r}{2+r(1+\varepsilon) }} (S_T)}
   \lesssim \norm{A }^3_{L_t^\infty L_x^{\frac{6r}{2+ r(1+\varepsilon)}}(S_T)} 
  \lesssim \norm{A }_{L_t^\infty H^{1-\varepsilon}(S_T)}^3,
 \end{align*}
 where in the first two inequalities we used \eqref{Am12}.

 \subsection{Proof of \eqref{FX}}
 
Using \eqref{Curvature} and \eqref{Substitution}, we can write
\begin{equation}\label{CurvS}
\left\{
\begin{aligned} 
  F_{ij, \pm}&= \partial_i A_{j, \pm}- \partial_j A_{i, \pm}+ \frac12\left( [A_i, A_j] \pm 
  \frac{1}{i\angles{\nabla}} \partial_t  [A_i, A_j]  \right),
  \\
  F_{0i, \pm}&= \partial_t A_{i, \pm}- \partial_i A_{0, \pm}+ \frac12\left( [A_0, A_i] \pm 
  \frac{1}{i\angles{\nabla}} \partial_t  [A_0, A_i]  \right),
 \end{aligned}
\right.
\end{equation} 
 where $\partial_t A$ in the parentheses can also be written in terms of $A_\pm$.
 
 In view of \eqref{CurvS}, the claim \eqref{FX} reduces to 
\begin{align}
\label{Aq}
  &A^2, \quad  \angles{\nabla}^{-1}( A\partial A) \in X_\pm^{-\sigma, \frac12+\sigma}(S_T),
  \\
  \label{Al}
 & \partial A_\pm  \in X_\pm^{-\sigma, \frac12+\sigma}(S_T).
\end{align}

Let us first prove \eqref{Aq}. Using \eqref{AX} and the substitution \eqref{Substitution}, this reduces to
\begin{align}
\label{Aq1}
 \norm{uv}_{X_\pm^{-\sigma, \frac12+\sigma}}&\lesssim  \norm{u}_{X_{\pm_1}^{1-\sigma, 1-\sigma}}
 \norm{v}_{X_{\pm_2}^{1-\sigma, 1-\sigma}},\\
 \label{Aq2}
 \norm{uv}_{X_\pm^{-1-\sigma, \frac12+\sigma}}&\lesssim  \norm{u}_{X_{\pm_1}^{1-\sigma, 1-\sigma}}
 \norm{v}_{X_{\pm_2}^{-\sigma, 1-\sigma}}.
\end{align}
We would like to reduce \eqref{Aq1}--\eqref{Aq2} to the corresponding estimates 
where all the $X$ norms are replaced by $H$ norms so that 
we can apply the product estimates in Theorem \ref{ThmAtlas} to prove them.  In view of \eqref{HXH}, 
we can do this to the right hand side, but not to the left hand side since the 
hyperbolic exponents to the $X$ norms are positive, i.e., $ \frac12+\sigma$. We can nevertheless do the following trick 
to fix the problem.
Let $(\lambda, \eta)$ and $(\tau-\lambda, \xi-\eta)$ be the Fourier variables for $u$ and $v$ 
respectively, such that $(\tau, \xi)$ is the Fourier variable for the product $uv$. Then we have the identity
$$
-\tau \pm |\xi|= (-\lambda \pm_1 |\eta|) +  (-(\tau-\lambda) \pm_2 |\xi-\eta|) + (\pm |\xi| \mp_1  |\eta|\mp_2 |\xi-\eta| ), 
$$
which implies 
\begin{equation}\label{HypW}
 |-\tau \pm |\xi| |\lesssim |-\lambda \pm_1 |\eta| | +   |-(\tau-\lambda) \pm_2 |\xi-\eta|| + \max( |\eta|, |\xi-\eta| ). 
\end{equation}
Now, using this estimate we can reduce \eqref{Aq1} (here we also use symmetry)
\begin{equation*}
\left\{
\begin{aligned} 
\norm{uv}_{H^{-\sigma, 0}}&\lesssim  \norm{u}_{H^{1-\sigma, \frac12-2\sigma}}
 \norm{v}_{ H^{1-\sigma, 1-\sigma}},
 \\
 \norm{uv}_{H^{-\sigma, 0}}&\lesssim  \norm{u}_{H^{ \frac12-2\sigma, 1-\sigma}}
 \norm{v}_{ H^{1-\sigma, 1-\sigma}}
\end{aligned}
\right.
\end{equation*} 
both of which hold by Theorem \ref{ThmAtlas}.
Similarly, \eqref{Aq2} reduces to
\begin{equation*}
\left\{
\begin{aligned} 
\norm{uv}_{H^{-1-\sigma, 0}}&\lesssim  \norm{u}_{H^{1-\sigma, \frac12-2\sigma}}
 \norm{v}_{ H^{-\sigma, 1-\sigma}}
 \\
 \norm{uv}_{H^{-1-\sigma, 0}}&\lesssim  \norm{u}_{H^{1-\sigma, 1-\sigma}}
 \norm{v}_{ H^{-\sigma, \frac12-2\sigma}}
 \\
 \norm{uv}_{H^{-1-\sigma, 0}}&\lesssim  \norm{u}_{H^{\frac12-2\sigma, 1-\sigma}}
 \norm{v}_{ H^{-\sigma, 1-\sigma}}
 \\
 \norm{uv}_{H^{-1-\sigma, 0}}&\lesssim  \norm{u}_{H^{1-\sigma, 1-\sigma}}
 \norm{v}_{ H^{-\frac12-2\sigma,1-\sigma}},
\end{aligned}
\right.
\end{equation*} 
all of which hold by Theorem \ref{ThmAtlas}.

Finally, we prove \eqref{Al}. Clearly, $\partial_j A_\pm \in X_\pm^{-\sigma, \frac12+\sigma}(S_T) $ by \eqref{AX}.
We remain to prove 
 $$\partial_t A_\pm \in X_\pm^{-\sigma, \frac12+\sigma}(S_T) .$$ 
 To do this, we first use the equation \eqref{Schem} to write
$$
\partial_t A_\pm=\pm i\angles{\nabla} A_\pm \pm i(2\angles{\nabla})^{-1}M'(A_+, A_-).
$$
The term $\angles{\nabla} A_\pm$ is clearly in $X_\pm^{-\sigma, \frac12+\sigma}(S_T)$. Looking at the terms in 
$M'$, we then reduce to proving 
\begin{align*}
  A, \  A\partial A , A^3 \in X_\pm^{-1-\sigma, \frac12+\sigma}(S_T).
  \end{align*}

For the linear
term $A=A_+ + A_-$, it suffices to show 
$A_\mp \in X_\pm^{-1-\sigma, \frac12+\sigma}(S_T) $. But using the the simple inequality 
$\angles{-\tau\pm |\xi|}\lesssim \angles{-\tau\mp |\xi|}\angles{\xi} $, this reduces to 
proving $A_\mp \in X_\mp^{-\frac12, \frac12+\sigma}(S_T) $,  which holds by \eqref{AX}.
By \eqref{Aq} the bilinear term $ A\partial A$ is also in  $X_\pm^{-1-\sigma, \frac12+\sigma}(S_T)$.

It remains to prove that the cubic term $A^3\in X_\pm^{-1-\sigma, \frac12+\sigma}(S_T)$. Since $A^2 \in X_\pm^{-\sigma, \frac12+\sigma}(S_T)$ by \eqref{Aq}, 
we reduce to 
\begin{align*}
\label{Aq3}
 \norm{uv}_{X_\pm^{-1-\sigma, \frac12+\sigma}}&\lesssim  \norm{u}_{X_{\pm_1}^{1-\sigma, 1-\sigma}}
 \norm{v}_{X_{\pm_2}^{-\sigma, \frac12+\sigma}}.
\end{align*}
 Again using \eqref{HypW}, this reduces to
\begin{equation*}
\left\{
\begin{aligned} 
\norm{uv}_{H^{-1-\sigma, 0}}&\lesssim  \norm{u}_{H^{1-\sigma, \frac12-2\sigma}}
 \norm{v}_{ H^{-\sigma, \frac12+ \sigma}},
 \\
 \norm{uv}_{H^{-1-\sigma, 0}}&\lesssim  \norm{u}_{H^{1-\sigma, 1-\sigma}}
 \norm{v}_{ H^{-\sigma, 0}},
 \\
 \norm{uv}_{H^{-1-\sigma, 0}}&\lesssim  \norm{u}_{H^{\frac12-2\sigma, 1-\sigma}}
 \norm{v}_{ H^{-\sigma, \frac12+\sigma}},
 \\
 \norm{uv}_{H^{-1-\sigma, 0}}&\lesssim  \norm{u}_{H^{1-\sigma, 1-\sigma}}
 \norm{v}_{ H^{-\frac12-2\sigma, \frac12+\sigma}}
 \end{aligned}
\right.
\end{equation*} 
all of which hold by Theorem \ref{ThmAtlas}.

\subsection*{Acknowledgement}
I would like to thank Sebastian Herr for suggesting me to work on the low regularity problem of YM and for having 
useful discussions. I would also like to thank Sigmund Selberg for his encouragement, useful discussions
and for his hospitality when I visited the University of Bergen.

\end{document}